\newtheorem{defin}{}
\newtheorem{saetze}[defin]{}
\newtheorem{conjec}[defin]{}
\newtheorem{lemmas}[defin]{}
\newtheorem{folger}[defin]{}
\newtheorem{bemerk}[defin]{}
\newenvironment{theorem}  {\begin{saetze}\it {\bf Theorem:}}{\end{saetze}}
\newenvironment{remark}   {\begin{bemerk}\it {\bf Remark:}}{\end{bemerk}}
\newenvironment{proof}    {{\it Proof}:}{{\hfill \fillbox \bigskip}}
\newcommand{\fillbox}{\mbox{$\bullet$}}
\newcommand{\ra}{\rightarrow}
\newcommand{\ol}{\overline}
\newcommand{\split}{\rtimes}
\newcommand{\N}{\mathbb N}
\newcommand{\F}{\mathbb F}
\newcommand{\GL}{\mathrm{GL}}
\newcommand{\Aut}{\mathrm{Aut}}
\newcommand{\ind}{\mathrm{ind}}
\newcommand{\DC}{\mathrm{DC}}
\newcommand{\GN}{\mathcal N}
\newcommand{\w}{w}
\renewcommand{\O}{\mathcal O}
\renewcommand{\SS}{\mathcal S}
\newcommand{\K}{\mathcal K}
\newenvironment{items}{\begin{list}{$\alph{item})$}
{\labelwidth18pt \leftmargin18pt \topsep3pt \itemsep1pt \parsep0pt}}
{\end{list}}
\begin{document}

\title{Enumeration of groups \\ whose order factorises in at most $4$ primes}
\author{Bettina Eick}
\date{\today}
\maketitle

\begin{abstract}
Let $\GN(n)$ denote the number of isomorphism types of groups of order $n$. 
We consider the integers $n$ that are products of at most $4$ not necessarily
distinct primes and exhibit formulas for $\GN(n)$ for such $n$.
\end{abstract}

\section{Introduction}

The construction up to isomorphism of all groups of a given order $n$ is 
an old and fundamental problem in group theory. It has been initiated by 
Cayley \cite{Cay54} who determined the groups of order at most $6$. Many
publications have followed Cayley's work; A history of the problem can
be found in \cite{BEO02}. 

The enumeration of the isomorphism types of groups of order $n$ is a related
problem. The number $\GN(n)$ of isomorphism types of groups of order $n$ is 
known 
for all $n$ at most $2~000$, see \cite{BEO02}, and for almost all $n$ at 
most $20~000$, see \cite{EHH16}. Asymptotic estimates for $\GN(n)$ have
been determined in \cite{Pyb98}. However, there is no closed formula known 
for $\GN(n)$ as a function in $n$. Many details on the group enumeration 
problem can be found in \cite{CDOB08} and \cite{BNV07}. 

Higman \cite{Hig60b} considered prime-powers $p^m$. His PORC conjecture 
suggests that $\GN(p^m)$ as a function in $p$ is PORC (polynomial on 
residue classes). This has been proved for all $m \leq 7$, see H\"older 
\cite{Hol93} for $m \leq 4$, Bagnera \cite{Bag99} or Girnat \cite{Gir03} 
for $m = 5$, Newman, O'Brien \& Vaughan-Lee \cite{NOV04} for $m=6$ and 
O'Brien \& Vaughan-Lee \cite{OVL05} for $m=7$. To exhibit the flavour of 
the results, we recall the explicit PORC polynomials for $\GN(p^m)$ for 
$m \leq 5$ as follows.

\begin{theorem} {\bf (H\"older \cite{Hol93}, Bagnera \cite{Bag99})} 
\begin{items}
\item[$\bullet$]
$\GN(p^1) = 1$ for all primes $p$.
\item[$\bullet$]
$\GN(p^2) = 2$ for all primes $p$.
\item[$\bullet$]
$\GN(p^3) = 5$ for all primes $p$.
\item[$\bullet$]
$\GN(2^4) = 14$ and $\GN(p^4) = 15$ for all primes $p \geq 3$.
\item[$\bullet$]
$\GN(2^5) = 51$, $\GN(3^5) = 71$ and 
$\GN(p^5) = 2p + 61 + 2 \gcd(p-1,3) + \gcd(p-1,4)$ for all primes
$p \geq 5$.
\end{items}
\end{theorem}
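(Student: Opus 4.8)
The plan is to treat the abelian and the non-abelian groups of order $p^m$ separately. By the fundamental theorem of finite abelian groups, the abelian groups of order $p^m$ are in bijection with the partitions of $m$, contributing $1, 2, 3, 5, 7$ types for $m = 1, \dots, 5$ independently of $p$. It then remains to count the non-abelian groups of order $p^m$ for $2 \leq m \leq 5$, which I would do by increasing $m$.

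For $m = 2$ every group has nontrivial centre and is therefore abelian, so $\GN(p^2) = 2$. For $m = 3$ a non-abelian $G$ satisfies $|G'| = |Z(G)| = p$ and $G/Z(G) \cong C_p \times C_p$; splitting by the exponent of $G$ one obtains, for odd $p$, the two extraspecial groups of exponent $p$ and $p^2$, and, for $p = 2$, the groups $D_8$ and $Q_8$. In every case there are exactly two non-abelian types, hence $\GN(p^3) = 5$.

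For $m = 4$ and $m = 5$ I would organise the non-abelian $p$-groups $G$ by nilpotency class (equivalently by $|G'|$), and within each class by the isomorphism type of $G/\Phi(G)$ together with the proper quotients of $G$ of smaller order. Fixing such a quotient $Q$, the groups $G$ mapping onto $Q$ correspond to the $\Aut(Q)$-orbits on a suitable subspace of $H^2(Q, M)$ for the relevant module $M$ — equivalently, in the language of the $p$-group generation algorithm, to orbits of allowable subgroups of the $p$-covering group of $Q$. Enumerating these orbits gives nine non-abelian types for $m = 4$ when $p = 2$ and ten when $p \geq 3$, so that $\GN(2^4) = 14$ and $\GN(p^4) = 15$ for $p \geq 3$. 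For $m = 5$ the same scheme applies, but now several of the orbit-counting subproblems — for instance classifying the possible power maps, or pairs of alternating or quadratic forms on modules of rank at most $2$, up to the action of $\GL_2(\F_p)$ or of a Borel subgroup thereof — have answers depending on how many primitive cube and fourth roots of unity lie in $\F_p^\times$; assembling the cases then yields the formula $2p + 61 + 2\gcd(p-1,3) + \gcd(p-1,4)$ for $p \geq 5$.

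The main obstacle is exactly this $m = 5$ bookkeeping: the number of cases is large, the module structures and the second cohomology groups must be determined correctly, and one must check that groups arising from different quotients or different orbits are pairwise non-isomorphic — which is done by tracking invariants such as the abelian types of $G/G'$, $Z(G)$ and $\Phi(G)$, the exponent of $G$, and the number of elements of each order. The primes $p = 2$ and $p = 3$ are genuinely exceptional, because the $p$-th power maps, the forms, and the associated automorphism actions all degenerate in small characteristic; for these one obtains the counts by separate direct enumeration, giving the exceptional values $\GN(2^5) = 51$ and $\GN(3^5) = 71$.
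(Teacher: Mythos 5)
The paper does not prove this theorem at all: it is quoted as a known classical result, attributed to H\"older for $m \leq 4$ and to Bagnera (and Girnat's re-derivation) for $m = 5$, and it serves only as background for the orders $p^2q$, $p^3q$, $p^2q^2$, $p^2qr$ treated later. So there is no proof in the paper to compare yours against; the relevant question is whether your outline would itself constitute a proof.

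It does not yet. The parts you actually argue are fine: the partition count $1,2,3,5,7$ for the abelian groups, the centre argument for $m=2$, and the extraspecial classification for $m=3$ are complete and correct, and your general strategy for $m=4,5$ (organise non-abelian $G$ by class and by quotients, count $\Aut(Q)$-orbits on allowable subgroups of the $p$-covering group, i.e.\ on suitable subspaces coming from $H^2(Q,M)$) is exactly the standard route by which this theorem is proved in the literature. But for $m=4$ and especially $m=5$ the proposal stops at the point where the real content begins: ``enumerating these orbits gives nine non-abelian types for $m=4$ when $p=2$ and ten when $p\geq 3$'' and ``assembling the cases then yields $2p+61+2\gcd(p-1,3)+\gcd(p-1,4)$'' are assertions of the answer, not derivations. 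The entire difficulty of the theorem lies in carrying out that case analysis --- listing the admissible quotients $Q$, computing the modules and the $\Aut(Q)$-actions, counting orbits (where the $\gcd(p-1,3)$ and $\gcd(p-1,4)$ terms and the linear term $2p$ must emerge from explicit orbit counts on forms or on subgroups of $\GL(2,p)$-type actions), and verifying pairwise non-isomorphism across cases --- and none of it is done; likewise the exceptional values $\GN(2^5)=51$ and $\GN(3^5)=71$ are simply stated as the outcome of a ``separate direct enumeration'' that is not given. As it stands this is a correct plan of proof with the decisive enumerative steps missing, so for the cases $m=4,5$ it assumes what it is meant to establish.
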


H\"older \cite{Hol95b} determined a formula for $\GN(n)$ for all square-free
$n$. For $m \in \N$ let $\pi(m)$ denote the number of different primes dividing 
$m$. For $m \in \N$ and a prime $p$ let $c_m(p)$ denote the number of prime 
divisors $q$ of $m$ with $q \equiv 1 \bmod p$. The following is also proved
in \cite[Prop. 21.5]{BNV07}.

\begin{theorem} {\bf (H\"older \cite{Hol95b})} \\
Let $n \in \N$ be square free. Then 
\[ \GN(n) = \sum_{m \mid n} \; \prod_{p \in \pi(\frac{n}{m})} 
    \frac{p^{c_m(p)}-1}{p-1}.\]
\end{theorem}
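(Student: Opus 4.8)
The plan is to exploit the classical structure theory of groups of square-free order. First I would recall that every group $G$ of square-free order $n$ is solvable (a classical fact of H\"older's, not requiring the classification of finite simple groups) and has every Sylow subgroup cyclic; hence the Fitting subgroup $N := F(G)$ is cyclic. Since $G$ is solvable, $C_G(N) = N$; and since $N$ is cyclic, $\Aut(N)$ is abelian, so the conjugation action of $G$ on $N$ has kernel $C_G(N)=N$ and descends to a faithful embedding $Q := G/N \hookrightarrow \Aut(N)$. Thus $Q$ is abelian of square-free order, hence cyclic. Because $\gcd(|N|,|Q|)=1$, the (elementary, soluble) case of Schur--Zassenhaus yields a splitting $G = N \rtimes_\phi Q$ with $\phi\colon Q \hookrightarrow \Aut(N)$ faithful. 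Writing $m = |N|$, we have $N \cong C_m$, $Q \cong C_{n/m}$, and $m$ ranges over the divisors of $n$.

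Next I would pin down when two such groups are isomorphic. As $N = F(G)$ is characteristic, any isomorphism $G_1 \to G_2$ maps $N_1$ onto $N_2$ (so $m_1 = m_2$) and induces isomorphisms $N_1 \to N_2$ and $G_1/N_1 \to G_2/N_2$; conversely the conjugation map $G/N \to \Aut(N)$ is intrinsic to $G$ (it is well defined because $N$ is abelian), and tracking it shows that the isomorphism type of $G$ determines $\phi$ exactly up to the action of $\Aut(N)\times\Aut(Q)$ in which $(\alpha,\beta)$ sends $\phi$ to $c_\alpha\circ\phi\circ\beta^{-1}$, where $c_\alpha$ denotes conjugation by $\alpha$ on $\Aut(N)$. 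Since $\Aut(N)$ is abelian, $c_\alpha$ is trivial, so after fixing $N = C_m$ and $Q = C_{n/m}$ the equivalence is simply precomposition by $\Aut(C_{n/m})$. Hence
\[ \GN(n) = \sum_{m \mid n} \#\bigl( \{\phi\colon C_{n/m} \to \Aut(C_m)\ \mbox{faithful}\} \big/ \Aut(C_{n/m}) \bigr). \]

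Now comes the orbit count, from which the arithmetic of the formula appears. Put $k = n/m$, so $\gcd(m,k)=1$. Using $C_k = \prod_{p \mid k} C_p$ and $\Aut(C_m) = \prod_{q \mid m} C_{q-1}$, a homomorphism $\phi$ is a tuple $(\phi_p)_{p\mid k}$ with $\phi_p \in \mathrm{Hom}(C_p,\Aut(C_m))$; identifying this group with the $p$-torsion subgroup $V_p \leq \Aut(C_m)$, one sees $V_p$ is an $\F_p$-vector space whose dimension is $c_m(p)$, since the factor $C_{q-1}$ contributes an $\F_p$-line precisely when $p \mid q-1$. The group $\Aut(C_k) = \prod_{p\mid k}\F_p^\times$ acts on $\prod_{p\mid k} V_p$ coordinatewise by scalar multiplication, and $\phi$ is faithful if and only if every $\phi_p \neq 0$. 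Therefore the number of orbits factors as $\prod_{p\mid k} \#\bigl( (V_p\setminus\{0\}) \big/ \F_p^\times \bigr) = \prod_{p\mid k} \frac{p^{c_m(p)}-1}{p-1}$, and summing over $m \mid n$ gives the claimed formula; the term $m=n$ contributes the empty product $1$ (the cyclic group), and any term with a prime $p \mid n/m$ for which $c_m(p)=0$ contributes $0$.

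The main work is the structural input of the first two paragraphs — that square-free order forces solvability and a self-centralizing cyclic Fitting subgroup with cyclic quotient, and that the resulting assignment $G \mapsto \bigl( m, \phi \bmod \Aut(C_{n/m}) \bigr)$ is a genuine bijection onto the indexing set (in particular that $F(C_m \rtimes_\phi C_{n/m}) = C_m$ whenever $\phi$ is faithful) — after which everything reduces to counting lines in $\F_p$-vector spaces. Alternatively one may quote the known classification of the metacyclic groups of square-free order and begin directly from the semidirect-product normal form.
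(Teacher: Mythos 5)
Your argument is correct, and in fact the paper offers no proof of this theorem at all: it is quoted as H\"older's classical result, with a reference to \cite{Hol95b} and to \cite[Prop.~21.5]{BNV07}, so there is no in-paper proof to compare against step by step. What you give is essentially the standard modern proof, and it is sound: square-free order forces solvability with all Sylow subgroups of prime order, hence a cyclic self-centralizing Fitting subgroup $N=F(G)$ with $G/N$ embedding in the abelian group $\Aut(N)$ and therefore cyclic; coprimality gives a splitting $G\cong C_m\rtimes_\phi C_{n/m}$ with $\phi$ faithful; since $\Aut(C_m)$ is abelian, isomorphism of two such groups (with $N$ characteristic as the Fitting subgroup) amounts to precomposition of $\phi$ by $\Aut(C_{n/m})$; and the orbit count decomposes prime by prime into counting lines in the $\F_p$-vector space $\Aut(C_m)[p]$ of dimension $c_m(p)$, giving $\prod_{p\mid n/m}\frac{p^{c_m(p)}-1}{p-1}$, with the degenerate cases ($m=n$, or $c_m(p)=0$) behaving as you say. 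Notably, your route is the same strategy the paper itself uses for its new results (Fitting subgroup analysis combined with Taunt's theorem, Theorems~9--11 there), so your proof is in harmony with the paper's methodology. The only point you flag but do not actually verify is that $F\bigl(C_m\rtimes_\phi C_{n/m}\bigr)=C_m$ when $\phi$ is faithful, which is needed for surjectivity of your correspondence; it is a one-line check (the Hall $m'$-part of the cyclic group $F(G)$ centralizes $C_m$, hence lies in $\ker\phi=1$, so $C_G(C_m)=C_m$ and $F(G)=C_m$), and including it would close the argument completely.
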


The aim here is to determine formulas for $\GN(n)$ if $n$ is a product of 
at most $4$ primes. If $n$ is a prime-power or is square-free, then such 
formulas follow from the results cited above. Hence it remains to consider 
the numbers $n$ that factorise as $p^2 q$, $p^3 q$, $p^2 q^2$ or $p^2 q r$ 
for different primes $p,q$ and $r$. For each of these cases we determine an 
explicit formula for $\GN(n)$, see Theorems \ref{thp2q}, \ref{thp3q}, 
\ref{thp2q2} and \ref{thp2qr}. Each of these formulas is a polynomial on 
residue classes; that is, there are finitely many sets of number-theoretic 
conditions on the involved primes so that $\GN(n)$ is a polynomial in the 
involved primes for each of the condition-sets. We summarise this in the
following theorem.

\begin{theorem} \label{PORC}
{\bf (See Theorems \ref{thp2q}, \ref{thp3q}, \ref{thp2q2}, \ref{thp2qr}
below)} \\
Let $p,q$ and $r$ be different primes and $n \in \{ p^2q, p^3q, p^2q^2,
p^2qr\}$. Then $\GN(n)$ is a polynomial on residue classes.
\end{theorem}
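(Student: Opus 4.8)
The plan is to prove Theorem \ref{PORC} by establishing the four individual formulas (Theorems \ref{thp2q}, \ref{thp3q}, \ref{thp2q2}, \ref{thp2qr}) and then observing that each resulting expression is manifestly a polynomial on residue classes in $p$, $q$ (and $r$). For each of the four factorisation shapes, I would count the groups of order $n$ by distinguishing them according to the structure of a Sylow system. The key structural tool is that a group $G$ of order $n$ with $n$ having only a few prime factors is built from its Sylow subgroups via an extension, and for the small nilpotency lengths occurring here one can use a Hall-type decomposition: there is a normal Hall subgroup (typically a normal Sylow subgroup, by counting arguments of Sylow-type on the number of Sylow $p$-subgroups), and $G$ is a split or non-split extension of it by a complement. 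First I would treat the easiest factorisation $p^2 q$: here $G$ has a normal Sylow subgroup in all but finitely many arithmetic subcases, and the count reduces to enumerating homomorphisms from a cyclic or elementary abelian group into $\Aut$ of a group of order $p^2$ or $q$, up to conjugacy; the number of such orbits is controlled by the $\gcd$-type quantities $c_m(p)$ already appearing in H\"older's square-free formula, together with extra terms coming from the two isomorphism types of groups of order $p^2$.

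Next I would handle $p^3 q$ and $p^2 q^2$ by the same scheme but with the extra bookkeeping that a group of order $p^3$ has five isomorphism types (two of them nonabelian, with automorphism groups of known order, cf.\ Theorem above) and that in the $p^2 q^2$ case either Sylow subgroup may fail to be normal, so one must separately enumerate the groups with a normal Sylow $p$-subgroup, those with a normal Sylow $q$-subgroup, and — after an inclusion-exclusion to avoid double counting the nilpotent ones — combine. In each case the enumeration of extensions $N \split H$ with $N$ a $p$-group of order $\le p^3$ and $H$ of order $q$ (or $q^2$) comes down to counting $H$-conjugacy classes of subgroups of $\Aut(N)$ of order dividing $|H|$, which by Sylow's theorem in $\Aut(N)$ and explicit knowledge of $\Aut(N)$ (for $p$-groups of order $\le p^3$ these are classical) is a polynomial-on-residue-classes function of $p$ and $q$; the residue-class conditions enter precisely through divisibility relations such as $q \mid p-1$, $q \mid p+1$, $q \mid p^2+p+1$, and the small-prime anomalies at $p=2,3$ or $q=2,3$. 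For the non-split extensions one uses that $H^2(H,Z(N))$ and the action of the relevant normaliser are again computable uniformly in $p,q$ away from finitely many small primes.

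Finally, for the shape $p^2 q r$ with three distinct primes, I would layer the previous analysis: a group of order $p^2 q r$ has a normal Hall subgroup of order dividing $p^2 q r$ by solvability, and by a Sylow-counting argument one identifies a normal subgroup of small index; the enumeration then decomposes along which of the primes contributes the top complement and reduces to counting, up to conjugacy, homomorphisms of a cyclic group of order $q$, $r$, or $qr$ into $\Aut$ of a group of order $p^2$, $p^2 q$, etc., which again by the coprimality of the orders and explicit structure of the automorphism groups yields a $\gcd$-laden polynomial expression — the three-prime analogue of H\"older's formula, enriched by the two types of Sylow $p$-subgroup. In every case, once the explicit formula is written down, the PORC property is immediate: a finite case split on the residue classes of $p,q,r$ modulo the moduli appearing in the various $\gcd$'s and congruence conditions (which are bounded: $2,3,4$ and the primes themselves in conditions like $q\equiv 1 \bmod p$) turns each $\gcd$ and each indicator of a divisibility condition into a constant, leaving a genuine polynomial in $p,q,r$ on each residue class. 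I expect the main obstacle to be not the PORC conclusion itself but the sheer volume of subcases in the $p^2 q^2$ and $p^2 q r$ analyses — in particular making sure the inclusion-exclusion over which Sylow subgroups are normal is done correctly so that groups with two normal Sylow subgroups (hence a nilpotent Hall factor) are counted exactly once, and correctly handling the finitely many small-prime exceptions ($p$ or $q$ equal to $2$ or $3$) where the generic automorphism-group structure degenerates.
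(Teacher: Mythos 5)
Your overall strategy coincides with the paper's: establish the four explicit formulas (Theorems \ref{thp2q}--\ref{thp2qr}) by splitting off a normal Sylow subgroup (or, for $p^2qr$, working along the Fitting subgroup), count the resulting split extensions $N \split U$ via conjugacy classes of subgroups of $\Aut(N)$ in the spirit of Taunt's theorem with the subgroup counts in $\GL(2,p)$ and $\GL(3,p)$ supplied separately, treat the small-prime degenerations as special cases, and then read off the PORC property from the fact that every term in the resulting formulas is a polynomial in the primes multiplied by divisibility indicators $\w_r(s)$. That last step is indeed immediate once the formulas are in hand, so the substance lies entirely in the four enumerations.

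One step as you state it would fail: for $n = p^2qr$ you invoke solvability of \emph{every} group of that order to produce a normal Hall subgroup, but $A_5$ has order $60 = 2^2\cdot 3\cdot 5$ and is not solvable. The paper handles this by noting that $A_5$ is the only non-solvable group of such an order and treating $\GN(60)=13$ as a special case; your ``finitely many small-prime exceptions where the automorphism-group structure degenerates'' does not cover this, because the failure is of solvability itself, not of the generic structure of $\Aut$. Two further, more minor, points of comparison. For $p^2q^2$ no inclusion--exclusion is needed: with $q>p$, Sylow's theorem forces the Sylow $q$-subgroup to be normal except for $(p,q)=(2,3)$, and in general a non-nilpotent group cannot have all its Sylow subgroups normal, so the paper's partition (nilpotent / non-nilpotent with normal $p$-Sylow / non-nilpotent with normal $q$-Sylow / no normal Sylow) is already disjoint; your proposed inclusion--exclusion is workable but adds avoidable bookkeeping. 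Likewise, since kernel and complement are coprime in the main cases, Schur--Zassenhaus makes all relevant extensions split, so the $H^2$ machinery you mention is only genuinely needed where the Fitting subgroup shares a prime with its quotient (the $|F|=pr$ case of order $p^2qr$, which the paper resolves with the Dietrich--Eick cube-free theorem rather than a direct cohomology computation).
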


The enumerations obtained in this paper overlap with various known 
results. For example, H\"older \cite{Hol93} 
considered the groups of order $p^2 q$, Western \cite{Wes99} those of 
order $p^3 q$, Le Vavasseur \cite{Vav99, Vav02} and Lin \cite{Lin74} 
those of order $p^2 q^2$ and Glenn \cite{Gle06} those of order $p^2 qr$.
Moreover, Laue \cite{Lau82} considered all orders of the form $p^a q^b$
with $a+b \leq 6$ and $a < 5$ and $b < 5$ as well as the orders dividing
$p^2 q^2 r^2$. 

So why are these notes written? There are two reasons. First, they
provide a uniform and reasonably compact proof for the considered group
numbers and they exhibit the resulting group numbers as a closed formula
with few case distinctions. Laue's work \cite{Lau82} also contains a 
unified approach towards the determination of its considered groups and
this approach is similar to ours, but it is not easy to read and to 
extract the results. Our second aim with these notes is to provide a 
uniform and reliable source for the considered group enumerations. The 
reliability of our results is based on its proofs as well as on a
detailed comparision with the Small Groups library \cite{smallgroups}.

\section*{Acknowledgements}

We give more details on the results available in the literature that 
overlap with the results here in the discussions before the Theorems 
\ref{thp2q}, \ref{thp3q}, \ref{thp2q2} and \ref{thp2qr}. Most of these
details have been provided by Mike Newman. The author thanks Mike Newman 
for this and also for various discussions on these notes.

\section{Divisibility}

For $r, s \in \N$ we define the function $\w_r(s)$ via $\w_r(s) = 1$ if 
$s \mid r$ and $\w_r(s) = 0$ otherwise. The following remark exhibits
the relation of $\w_r(s)$ and the underlying $\gcd$'s.

\begin{remark}
For $r, s \in \N$ it follows that
\[ \w_r(s) = \prod_{d \mid s, d \neq s} \frac{\gcd(r,s) - d}{s - d}. \]
\end{remark}

\section{Counting subgroups of linear groups}

For $r \in \N$ and a group $G$ we denote with $s_r(G)$ the number of
conjugacy classes of subgroups of order $r$ in $G$. We recall the 
following well-known result.

\begin{remark} \label{irred}
Let $n \in \N$ and $p$ prime. Then $\GL(n,p)$ has an irreducible 
cyclic subgroup of order $m$ if and only if $m \mid (p^n - 1)$ and 
$m \nmid (p^d - 1)$ for each $d < n$. Further, if there exists an 
irreducible cyclic subgroup of order $m$ in $\GL(n,p)$, then it is 
unique up to conjugacy. 
\end{remark}

The next theorem counts the conjugacy classes of subgroups of certain
orders in $\GL(2,p)$. With $C_r^s$ we denote the $s$-fold direct product
of cyclic groups of order $r$.

\begin{theorem}
\label{linear2}
Let $p,q$ and $r$ be different primes and let $G = \GL(2,p)$.
\begin{items}
\item[\rm (a)]
$s_2(G) = 2$ and for $q > 2$ it follows that
\[ s_q(G) = \frac{q+3}{2} \w_{p-1}(q) + \w_{p+1}(q). \] 
\item[\rm (b)]
$s_4(G) = 2 + 3 \w_{p-1}(4)$ and for $q > 2$ it follows that
\[ s_{q^2}(G) = \w_{p-1}(q) + \frac{q^2+q+2}{2} \w_{p-1}(q^2) + \w_{p+1}(q^2).\]
\item[\rm (c)] 
$s_{2r}(G) = \frac{3r+7}{2} \w_{p-1}(r) + 2 \w_{p+1}(r)$ and 
for $r > q > 2$ it follows that
\[ s_{qr}(G) = \frac{qr + q + r + 5}{2} \w_{p-1}(qr) + 
      \w_{p^2-1}(qr)(1-\w_{p-1}(qr)).\]
\end{items}
\end{theorem}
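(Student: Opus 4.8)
The plan is to work with the standard description of subgroups of $\GL(2,p)$ up to conjugacy, organising everything by whether a subgroup of the prescribed order $r$ is contained in a Sylow-like torus, in a Borel subgroup, or forces irreducibility. First I would record the orders of the natural subgroups of $G = \GL(2,p)$: the split torus $C_{p-1} \times C_{p-1}$ (and its overgroup the Borel $B$ of order $p(p-1)^2$, with unipotent radical $C_p$), the non-split torus $C_{p^2-1}$, and the monomial subgroup $N = (C_{p-1}\times C_{p-1})\split C_2$ which is the normaliser of the split torus. By Remark \ref{irred}, a cyclic subgroup acting irreducibly of order $m$ exists iff $m \mid p^2-1$ but $m \nmid p-1$, and it is then unique up to conjugacy; this is what produces the $\w_{p+1}(\cdot)$ and $\w_{p^2-1}(\cdot)$ terms. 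For each target order ($q$, $q^2$, $2$, $4$, $2r$, $qr$) I would enumerate the abstract isomorphism types of groups of that order and locate each type inside $G$.

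The key steps, in order. (1) Classify the subgroups of $G$ of order coprime to $p$ that are \emph{reducible}: such a subgroup lies in a Borel, hence (being a $p'$-group) is conjugate into the split torus $T = C_{p-1}\times C_{p-1}$, and two subgroups of $T$ are $G$-conjugate iff they are conjugate under $N/T \cong C_2$, i.e. under the coordinate swap. So counting reducible subgroups of order $r$ reduces to counting $\langle\text{swap}\rangle$-orbits on subgroups of order $r$ in $C_{p-1}\times C_{p-1}$; this is a purely abelian computation and is where the polynomial-in-$q$ coefficients such as $\tfrac{q+3}{2}$, $\tfrac{q^2+q+2}{2}$, $\tfrac{qr+q+r+5}{2}$ come from (the "$+$ constant over $2$" shape is exactly the count of subgroups of $C_q\times C_q$ etc., symmetrised). (2) Handle the irreducible cyclic subgroups via Remark \ref{irred}: a cyclic subgroup of order $q$, $q^2$, $r$ or $qr$ acting irreducibly contributes one class precisely when the relevant divisibility $\w_{p+1}$ or $\w_{p^2-1}$ holds but $\w_{p-1}$ fails — this yields the $\w_{p+1}(q)$, $\w_{p+1}(q^2)$, and $\w_{p^2-1}(qr)(1-\w_{p-1}(qr))$ summands. (3) Treat the non-cyclic possibilities of order $qr$ (resp. the group $C_2\times C_2$, $C_4$, $C_2$, etc. for the $2$-parts): a non-cyclic group of order $qr$ with $r>q$ is a nonabelian $C_r\split C_q$; decide when $\GL(2,p)$ contains one and whether it can be made reducible — if $r \mid p-1$ it embeds into the torus-normaliser picture only in the abelian case, so the nonabelian $C_r\split C_q$ forces its own analysis, and I would check it contributes nothing new beyond what $\w_{p-1}(qr)$ already counts (its presence is absorbed because $q \mid r-1$ is not implied). (4) For the small even orders, $s_2(G)=2$ because an involution in $\GL(2,p)$ is either $-I$ (central) or a diagonal $\mathrm{diag}(1,-1)$ (a reflection), giving exactly two classes; $s_4(G)$ then enumerates $C_4$ and $C_2\times C_2$ subgroups, where $C_4$ exists reducibly iff $4\mid p-1$ and the $C_2\times C_2$'s split into the central-times-reflection type plus the reflection-pair type, producing $2 + 3\w_{p-1}(4)$. (5) Finally assemble: for each order, sum the reducible count from step (1), the irreducible count from step (2), and any extra from step (3)–(4), and simplify using the $\w$-identities (e.g. $\w_{p-1}(q^2) \Rightarrow \w_{p-1}(q)$) to reach the stated closed forms.

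The main obstacle will be step (3) together with the bookkeeping of \emph{which} subgroups of $C_{p-1}\times C_{p-1}$ are already counted by a coarser $\w$ and which are genuinely new — in other words, making sure that across the six order-cases no subgroup is double-counted (e.g. a cyclic subgroup of order $qr$ that happens to be reducible must be attributed to the torus count and \emph{not} also to the $\w_{p^2-1}$ term, which is exactly why the factor $(1-\w_{p-1}(qr))$ appears) and none is omitted (the nonabelian $C_r\split C_q$, and for the $2$-part the Klein four-groups, are the easy ones to miss). Handling the interaction between the "$q\mid r-1$" condition and the torus conditions — i.e. verifying that a nonabelian group of order $qr$ either does not embed in $\GL(2,p)$ at all or does so only when it is already accounted for — is the delicate point; once that is pinned down, the remaining work is the routine abelian subgroup counting and the $\w$-algebra needed to collapse the pieces into the displayed formulas.
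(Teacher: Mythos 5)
Your overall strategy is the same as the paper's: conjugate reducible $p'$-subgroups into the diagonal torus and count orbits under the coordinate swap, count irreducible cyclic subgroups via Remark \ref{irred}, and treat the non-cyclic groups separately. However, two places in your plan are wrong or unresolved, and they are exactly the places where the stated formulas are decided. First, your account of $s_4(G)$ is incorrect: there is only \emph{one} conjugacy class of Klein four subgroups, because an abelian $p'$-subgroup of exponent $2$ is diagonalizable and the full $2$-torsion of the diagonal torus is already a single $C_2\times C_2$ containing $-I$, $\mathrm{diag}(-1,1)$ and $\mathrm{diag}(1,-1)$; your ``central-times-reflection'' and ``reflection-pair'' groups are literally the same subgroup. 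The correct breakdown behind $s_4(G)=2+3\w_{p-1}(4)$ is: one class of $C_2^2$, four (not three) classes of reducible $C_4$ when $4\mid p-1$, and one class of irreducible $C_4$ when $4\nmid p-1$ --- the last of these is missing from your step (4), even though your step (2) would in principle cover it.

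Second, the non-cyclic order-$qr$ case is not a matter of being ``absorbed'' into the $\w_{p-1}(qr)$ term. The fact you need (and the paper uses) is that such a subgroup cannot be reducible (a reducible $p'$-subgroup is diagonalizable, hence abelian), and if it is irreducible then Clifford theory applied to the normal $C_r$ forces the quotient of order $q$ to permute the at most two eigenspaces of $C_r$ transitively, so $q=2$. Hence for odd $q$ the nonabelian group of order $qr$ does not embed in $\GL(2,p)$ at all --- it contributes nothing, rather than ``nothing new''. Conversely, for $q=2$ the nonabelian subgroups $C_r\split C_2$ genuinely do embed: imprimitively inside the monomial group when $r\mid p-1$, and primitively with $C_r$ irreducible when $r\mid p+1$, giving one class each. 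These supply the extra $\w_{p-1}(r)+\w_{p+1}(r)$ that turns the cyclic count $\frac{3r+5}{2}\w_{p-1}(r)+\w_{p+1}(r)$ into $s_{2r}(G)=\frac{3r+7}{2}\w_{p-1}(r)+2\w_{p+1}(r)$; your remark that the nonabelian group meets the torus-normaliser picture ``only in the abelian case'' would lose these classes. With these two points repaired, the rest of your plan (the swap-orbit counts on $C_m\times C_m$ producing $\frac{q+3}{2}$, $\frac{q^2+q+2}{2}$, $\frac{qr+q+r+5}{2}$, and the $(1-\w_{p-1}(qr))$ correction for the irreducible cyclic case) is the paper's argument.
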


\begin{proof}
Let $m \in \N$ with $p \nmid m$. As a preliminary step in this proof, we 
investigate the number of conjugacy classes of cyclic subgroups of order
$m$ in $\GL(2,p)$. By Remark \ref{irred}, an irreducible subgroup of this 
form exists if and only if $m \mid (p^2-1)$ and $m \nmid (p-1)$ and its 
conjugacy class is unique in this case. A reducible cyclic subgroup of 
order $m$ in $\GL(2,p)$ embeds into the group of diagonal matrices $D$. Note 
that $D \cong C_{p-1}^2$ and thus $\GL(2,p)$ has a reducible cyclic subgroup 
of order $m$ if and only if $m$ divides the exponent $p-1$ of $D$. If 
$m \mid (p-1)$, then there exists a unique subgroup $U \cong C_m^2$ in 
$D$. This subgroup $U$ contains every cyclic subgroup of order $m$ in $D$. 
The group $\GL(2,p)$ acts on $D$ and on $U$ by permutation of the diagonal 
entries of an element of $D$. 

(a) Each group of prime order $q$ is cyclic. An irreducible subgroup of order
$q$ exists if and only if $q \neq 2$ and $q \mid (p+1)$, since $p^2-1 = (p-1)
(p+1)$ and $\gcd(p-1,p+1) \mid 2$. A reducible subgroup of order $q$ exists 
if and only $q \mid (p-1)$. In this case, the number of conjugacy classes of 
reducible cyclic subgroups of order $q$ in $\GL(2,p)$ can be enumerated as 
$2$ if $q = 2$ and $(q+3)/2$ otherwise, as there are $(q+1)$ subgroups of 
order $q$ in $C_q^2$ and all but the subgroups with diagonals of the form 
$(a,a)$ or $(a,a^{-1})$ for $a \in \F_p^*$ have orbits of length two under 
the action of $\GL(2,p)$ by permutation of diagonal entries. 

(b) We first consider the cyclic subgroups of order $q^2$ in $\GL(2,p)$.  
For the irreducible case, note that if $q^2 \mid (p^2-1)$ and $q^2 \nmid 
(p-1)$, then either $q = 2$ and $4 \nmid (p-1)$ or $q > 2$ and $q^2 \mid 
(p+1)$. For the reducible case we note that if $q^2 \mid (p-1)$, then there 
are $(q^2 + q + 2)/2$ conjugacy classes of reducible cyclic subgroups
of order $q^2$ in $\GL(2,p)$, as there are $(q^2+q)$ subgroups and all 
but those with diagonal of the form $(a,a)$ and $(a,a^{-1})$ for $a \in 
\F_p^*$ have orbits of length two. Thus the number of  conjugacy classes 
of cyclic subgroups of order 
$q^2$ in $\GL(2,p)$ is $1 + 3 \w_{p-1}(4)$ if $q = 2$ and $(q^2 + q + 2)/2 
\cdot \w_{p-1}(q^2) + \w_{p+1}(q^2)$ if $q > 2$. It remains to consider 
the subgroups of type $C_q^2$. Such a subgroup is reducible and exist if 
$q \mid (p-1)$. In this case there exists a unique conjugacy class of such 
subgroups.
 
(c)
We first consider the cyclic subgroups of order $qr$ in $\GL(2,p)$. If $q = 2$, 
then $p \neq 2$. Thus $2r \mid (p^2-1)$ and $2r \nmid (p-1)$ if and only if 
$r \mid (p+1)$. As in the previous cases, this yields that there are 
$(3r + 5)/2 \cdot \w_{p-1}(r) + \w_{p+1}(r)$ cyclic subgroups of order 
$2r$ in $\GL(2,p)$. If $q > 2$, then $r > 2$.  The number of cyclic subgroups
of order $qr$ in $\GL(2,r)$ in this case is $(qr+r+q+5)/2 \cdot \w_{p-1}(qr) 
+ \w_{p^2-1}(qr)(1-\w_{p-1}(qr)$ using the same arguments as above. It remains 
to consider the case of non-cyclic subgroups. Such a subgroup $H$ is 
irreducible and satisfies $q = 2$. If $H$ is imprimitive, then $C_r$
is diagonalisable; there is one such possibility if $r \mid (p-1)$. If
$H$ is primitive, then $C_r$ is irreducible; there is one such possibility
if $r \mid (p+1)$. 
\end{proof}

We extend Theorem \ref{linear2} with the following.

\begin{remark} \label{norms}
Let $p$ and $q$ be different primes and let $G = \GL(2,p)$. If $H$ is a 
subgroup of order $q$ in $G$, then $[N_G(H):C_G(H)] \mid 2$.
The number of groups $H$ of order $q$ in $G$ satisfying 
$[N_G(H) : C_G(H)] = 2$ is $0$ for $q = 2$ and $w_{p-1}(q) + w_{p+1}(q)$ 
for $q > 2$.
\end{remark}

\begin{proof}
Consider the groups $H$ of order $q$ in $\GL(2,p)$. If $H$ is irreducible,
then $H$ is a subgroup of a Singer cycle and this satisfies $[N_G(H):C_G(H)]
= 2$. If $H$ is reducible, then $H$ is a subgroup of the group of diagonal
matrices $D$. The group $D$ satisfies $[N_G(D):C_G(D)] = 2$, where the 
normalizer acts by permutation of the diagonal entries. As in the proof
of Theorem \ref{linear2} (a), only the group $H$ with diagonal of the form
$(a,a)$ for $a \in \F_p^*$ satisfies $[N_G(H):C_G(H)] = 2$.
\end{proof}

\begin{theorem}
\label{linear3}
Let $p$ be a prime, let $G = \GL(3,p)$ and let $q \neq p$.
Then $s_2(G) = 3$ and for $q > 2$ it follows that
\[ s_q(G) = \frac{q^2 + 4q + 9 + 4 \w_{q-1}(3)}{6} \w_{p-1}(q) 
              + \w_{(p+1)(p^2+p+1)}(q)(1 - \w_{p-1}(q)). \] 
\end{theorem}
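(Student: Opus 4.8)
The plan is to follow the template set by the proof of Theorem~\ref{linear2}, counting conjugacy classes of subgroups of order $q$ (or $2$) in $G = \GL(3,p)$ by splitting according to the possible reducibility types of such a subgroup $H$. Since $q$ is prime, $H$ is cyclic, so the first step is to record, via Remark~\ref{irred}, when an irreducible cyclic subgroup of order $q$ exists: this happens precisely when $q \mid (p^3-1)$ but $q \nmid (p-1)$, equivalently $q \mid (p^2+p+1)$ and $q \nmid (p-1)$, and in that case it is unique up to conjugacy. A subgroup of order $q$ that is reducible but acts irreducibly on a $2$-dimensional invariant subspace is counted by the analysis inside $\GL(2,p)$ already done (contributing via $\w_{p+1}(q)$ combined with $\w_{p-1}$ conditions), while the remaining reducible subgroups are diagonalisable, i.e.\ conjugate into the group $D \cong C_{p-1}^3$ of diagonal matrices, and exist exactly when $q \mid (p-1)$.

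The main work is the diagonalisable case. When $q \mid (p-1)$ there is a unique subgroup $U \cong C_q^3$ of $D$ containing every subgroup of order $q$ in $D$, and $\GL(3,p)$ acts on $U$ through the symmetric group $S_3$ permuting the three diagonal coordinates. So I would count the $S_3$-orbits on the $(q^2+q+1)$ one-dimensional subspaces of $\F_q^3$ — equivalently, points of the projective plane over $\F_q$ — but with the subtlety that two subgroups of $U$ conjugate under the coordinate-permutation action of $N_G(D)$ can also be fused by elements of $G$ lying outside $N_G(D)$ when the corresponding character has smaller image. Concretely, a subgroup of order $q$ of $D$ whose eigenvalues (as a triple of $q$-th roots of unity up to the diagonal action) are, say, of the form $(1,1,\zeta)$ or $(\zeta,\zeta^{-1},1)$-type lies in a $\GL(1,p)\times\GL(2,p)$ or similar parabolic and must be checked for extra fusion and for contributions already counted in the reducible-irreducible case; this bookkeeping is what produces the correction term $\w_{q-1}(3)$ (distinguishing whether the ``regular'' character triples $(\zeta,\zeta^a,\zeta^{a^2})$ split into extra $S_3$-orbits, which happens iff $3 \mid q-1$) and the exact shape of the polynomial $(q^2+4q+9+4\w_{q-1}(3))/6$.

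Carrying this out, I would compute $|S_3$-orbits on $\P(\F_q^3)|$ by Burnside's lemma: the identity fixes $q^2+q+1$ points, each of the three transpositions fixes a line's worth plus a fixed point (giving $q+1$ each after care), and the two $3$-cycles fix $1 + 2\w_{q-1}(3)$ points (three collinear fixed points when a primitive cube root of unity exists in $\F_q$, i.e.\ $3 \mid q-1$, and the single point $(1,1,1)$ otherwise). Averaging gives $\big((q^2+q+1) + 3(q+1) + 2(1+2\w_{q-1}(3))\big)/6 = (q^2+4q+9+4\w_{q-1}(3))/6$, which is exactly the coefficient of $\w_{p-1}(q)$ in the claimed formula, so this orbit count subsumes \emph{all} reducible cases (including the reducible-irreducible ones, since over $\overline{\F_p}$ those also diagonalise and the $\GL(3,p)$-fusion matches the $S_3$-fusion). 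The term $\w_{(p+1)(p^2+p+1)}(q)(1-\w_{p-1}(q))$ then accounts for the purely irreducible or mixed cases occurring only when $q \nmid (p-1)$: $q$ divides $p^2+p+1$ gives the genuinely irreducible Singer-type class, and $q \mid p+1$ (with $q \nmid p-1$) gives the reducible subgroup acting irreducibly on a $2$-space. For $q = 2$ one checks directly (as in Theorem~\ref{linear2}) that the relevant fusion collapses these counts to $s_2(G) = 3$, corresponding to the three $S_3$-orbits of involutions on $\F_2^3 \setminus \{0\}$ refined by conjugacy (a transvection-type, a pseudoreflection-type, and a product-of-two type).

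The hardest part is making the fusion analysis in the reducible case rigorous: one must verify that the $\GL(3,p)$-conjugacy on subgroups of $U$ is exactly the $N_G(D)/C_G(D) \cong S_3$ action and no more — i.e.\ that $N_G(H)$ for such $H$ normalises $D$ up to conjugacy — and that the ``reducible but irreducible-on-a-subspace'' subgroups do not get double-counted against the diagonalisable ones when $q \nmid (p-1)$ versus silently merging into them when $q \mid (p-1)$. I would handle this by noting that any $H \cong C_q$ with $q \mid p-1$ is contained in a maximal torus conjugate to $D$, that all such tori are $G$-conjugate, and that the Weyl group $N_G(D)/C_G(D)$ realises precisely $S_3$; the case $q \mid p+1$ with $q \nmid p-1$ forces the eigenvalues of $H$ over $\overline{\F_p}$ to lie in $\F_{p^2}\setminus\F_p$ paired by Frobenius, pinning down a unique class, and $q \mid p^2+p+1$ with $q\nmid p-1$ similarly forces a full $\F_{p^3}$-eigenvalue orbit, again unique by Remark~\ref{irred}. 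Assembling these gives the stated formula.
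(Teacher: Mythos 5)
Your overall strategy is the same as the paper's: when $q \mid (p-1)$ all subgroups of order $q$ are diagonalisable and their $G$-classes correspond to orbits of the coordinate-permutation action of $S_3$ on the $q^2+q+1$ subgroups of order $q$ in $C_q^3 \leq D$, while the non-diagonalisable possibilities are a class coming from an irreducible subgroup of $\GL(2,p)$ ($q \mid p+1$) or a fully irreducible Singer-type class ($q \mid p^2+p+1$, via Remark \ref{irred}), which are mutually exclusive since $\gcd(p+1,p^2+p+1)=1$. However, your central Burnside computation is wrong as written. For odd $q$, a transposition, say swapping the first two coordinates, fixes not $q+1$ but $q+2$ of the one-dimensional subspaces: the $q+1$ subspaces inside the plane $x_1=x_2$ (eigenvalue $\lambda=1$) \emph{and} the subspace spanned by $(1,-1,0)$ (eigenvalue $\lambda=-1$) — this is precisely the $(a,a^{-1})$-diagonal phenomenon that appears in the proof of Theorem \ref{linear2}, so it must be included, not discarded "after care". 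With your value $3(q+1)$ the average is $\bigl((q^2+q+1)+3(q+1)+2(1+2\w_{q-1}(3))\bigr)/6 = (q^2+4q+6+4\w_{q-1}(3))/6$, which is \emph{not} the claimed $(q^2+4q+9+4\w_{q-1}(3))/6$; the displayed equality is false, so the proof as written does not establish the theorem. Restoring the three omitted fixed points (one per transposition) gives $3(q+2)$ and the correct coefficient; for $q=2$ the $\lambda=-1$ point coincides with the $\lambda=1$ line, each transposition fixes $3$ points, and Burnside gives $(7+9+2)/6=3=s_2(G)$.

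A secondary point: your claim that the diagonalisable orbit count "subsumes" the subgroups with an irreducible $2$-dimensional constituent "since over $\overline{\F_p}$ those also diagonalise and the $\GL(3,p)$-fusion matches the $S_3$-fusion" is not a valid argument — conjugacy in $\GL(3,p)$ is not governed by the split torus when the eigenvalues lie outside $\F_p$. The correct (and simpler) observation is that this case is vacuous: if $q$ is odd and $q \mid (p-1)$, then the $q$-th roots of unity lie in $\F_p$, so every subgroup of order $q$ is diagonalisable over $\F_p$, and $q \mid (p+1)$ cannot hold since $\gcd(p-1,p+1)\mid 2$; the only overlap between the two summands of the formula is $q=3$ with $3\mid p-1$, which the factor $(1-\w_{p-1}(q))$ removes. (Also, your fusion reduction to the Weyl group $S_3$ is fine, and a minor slip: the three classes of involutions for $q=2$ are diagonalisable, e.g. $\mathrm{diag}(-1,1,1)$, $\mathrm{diag}(-1,-1,1)$, $-I$; none is a transvection.)
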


\begin{proof}
We first consider the diagonalisable subgroups of order $q$ in $\GL(3,p)$.
These exist if $q \mid (p-1)$. If this is the case, then the group $D$ of
diagonal matrices has a subgroup of the form $C_q^3$ and this contains all
subgroups of order $q$ in $D$. The group $D$ has $q^2 + q + 1$ subgroups 
of order $q$ and these fall under the permutation action of diagonal entries
into $3$ orbits if $q = 2$, into $(q^2+4q+9)/6$ orbits if $q > 2$ and $3 
\nmid (q-1)$ and into $(q^2+4q+13)/6$ orbits if $q > 2$ and $3 \mid (q-1)$. 
Next, we consider the groups that are not diagonalisable. These 
can arise from irreducible subgroups in $\GL(2,p)$ or in $\GL(3,p)$. In 
the first case, there exists one such class if $q > 2$ and $q \mid (p+1)$
as in Theorem \ref{linear2}.
In the second case, by Remark \ref{irred} there exists one such class if 
$q \mid (p^3-1)$ and $q \nmid (p^2-1)$ and $q \nmid (p-1)$. 
Note that the two cases are mutually exclusive. In summary, there exists 
an irreducible subgroup
of order $q$ in $\GL(3,p)$ if $q > 2$ and $q \mid (p+1)(p^2+p+1)$ and 
$q \nmid (p-1)$.
\end{proof}

We note that $\gcd( (p+1)(p^2+p+1), p-1) \mid 6$ for each prime $p$. 
Thus for $q \geq 5$ it follows that $\w_{(p+1)(p^2+p+1)}(q) 
= \w_{(p+1)(p^2+p+1)}(q) (1-\w_{p-1}(q))$ which simplifies the formula
in Theorem \ref{linear3}.

\section{Counting split extensions}

For two groups $N$ and $U$ let $\Pi(U,N)$ denote the set of all group 
homomorphisms $\varphi : U \ra \Aut(N)$. The direct product $\Aut(U) 
\times \Aut(N)$ acts on the set $\Pi(U,N)$ via 
\[ \varphi^{(\alpha, \beta)}(g) = \beta^{-1} (\varphi(\alpha^{-1}(g))) \beta\]
for $(\alpha, \beta) \in \Aut(U) \times \Aut(N)$, $\varphi \in \Pi(U,N)$
and $g \in N$. If $\ol{\beta}$ is the conjugation by $\beta$ in $\Aut(N)$, then
this action can be written in short form as
\[ \varphi^{(\alpha, \beta)} = \ol{\beta} \circ \varphi \circ \alpha^{-1}. \]
Given $\varphi \in \Pi(U, N)$, the stabilizer of $\varphi$ in $\Aut(U) 
\times \Aut(N)$ is called the group of compatible pairs and is denoted by 
$Comp(\varphi)$. If $N$ is abelian, then $N$ is an $U$-module via $\varphi$
for each $\varphi \in \Pi(U,N)$. In this case $Comp(\varphi)$ acts on 
$H^2_\varphi(U, N)$ induced by its action on $Z^2_\varphi(U,N)$ via
\[ \gamma^{(\alpha, \beta)}(g,h) 
   = \beta^{-1}(\gamma(\alpha^{-1}(g), \alpha^{-1}(h)))\]
for $\gamma \in Z^2_\varphi(U, N)$, $(\alpha, \beta) \in Comp(\varphi)$ 
and $g,h \in U$.  These constructions can be used to solve the isomorphism 
problem for extensions in two different settings. We recall this in the 
following.

\subsection{Extensions with abelian kernel}

Suppose that $N$ is abelian and that $N$ is fully invariant in each extension
of $N$ by $U$; this is, for example, the case if $N$ and $U$ are coprime or
if $N$ maps onto the Fitting subgroup in each extension of $N$ by $U$.
The following theorem seems to be folklore.

\begin{theorem} 
\label{Compat}
Let $N$ be finite abelian and $U$ be a finite group so that $N$ is fully 
invariant in each extension of $N$ by $U$. Let $\O$ be a complete set 
of representatives of the $\Aut(U) \times \Aut(N)$ orbits in $\Pi(U,N)$
and for each $\varphi \in \O$ let $o_\varphi$ denote the number of orbits 
of $Comp(\varphi)$ on $H^2_\varphi(U,N)$. Then the number of isomorphism
types of extensions of $N$ by $U$ is 
\[ \sum_{\varphi \in \O} o_\varphi.\]
\end{theorem}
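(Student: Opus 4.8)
The plan is to reduce the enumeration of isomorphism types of extensions of $N$ by $U$ to an orbit-counting problem over $\Pi(U,N)$, combining the classical cohomological classification of extensions with a prescribed action with the full invariance of $N$.

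First I would recall the standard theory of extensions with abelian kernel: for each fixed $\varphi \in \Pi(U,N)$, the equivalence classes of extensions $1 \ra N \ra G \ra U \ra 1$ inducing the action $\varphi$ of $U$ on $N$ are parametrised by $H^2_\varphi(U,N)$, where two such extensions count as equivalent if there is an isomorphism between them restricting to the identity on $N$ and inducing the identity on $U$. Assembling these gives the set
\[ E = \{ (\varphi, c) : \varphi \in \Pi(U,N),\ c \in H^2_\varphi(U,N) \}, \]
with its projection $\rho : E \ra \Pi(U,N)$. The first point to check is that the action of $\Aut(U) \times \Aut(N)$ on $\Pi(U,N)$ recalled above lifts to an action on $E$ making $\rho$ equivariant: given $(\alpha,\beta) \in \Aut(U) \times \Aut(N)$ and an extension with data $(\varphi, c)$, pre-composing the surjection onto $U$ with $\alpha$ and post-composing the embedding of $N$ with $\beta$ yields an extension whose action is $\varphi^{(\alpha,\beta)} = \ol\beta \circ \varphi \circ \alpha^{-1}$ and whose class is represented by the cocycle $\gamma^{(\alpha,\beta)}(g,h) = \beta^{-1}(\gamma(\alpha^{-1}(g),\alpha^{-1}(h)))$. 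This is a routine but somewhat delicate computation with explicit $2$-cocycles: for $(\alpha,\beta) \in Comp(\varphi)$ it restricts to the action of $Comp(\varphi)$ on $H^2_\varphi(U,N)$, and in general it carries the fibre of $\rho$ over $\varphi$ onto the fibre over $\varphi^{(\alpha,\beta)}$.

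Next I would identify the isomorphism types of extension groups with the $\Aut(U) \times \Aut(N)$-orbits on $E$. Because $N$ is fully invariant in each extension, its image there is a characteristic subgroup canonically attached to the extension group; hence any group isomorphism $G \ra G'$ between two extensions of $N$ by $U$ maps the copy of $N$ in $G$ onto the copy of $N$ in $G'$, and therefore descends to an isomorphism of the quotients. Reading off the resulting automorphisms $\beta$ of $N$ and $\alpha$ of $U$ and combining this with the preceding step shows that $G \cong G'$ as abstract groups if and only if the corresponding points of $E$ lie in a single $\Aut(U) \times \Aut(N)$-orbit; conversely every point of $E$ is realised by an actual extension, so the correspondence is onto.

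Finally I would count the orbits by fibring over $\Pi(U,N)$. Since $\rho$ is $\Aut(U)\times\Aut(N)$-equivariant, each orbit on $E$ lies over a single orbit on $\Pi(U,N)$, hence meets the fibre over exactly one $\varphi \in \O$, and inside that fibre it is precisely an orbit of the stabiliser $Comp(\varphi)$ acting on $H^2_\varphi(U,N)$; this is the standard description of the orbits of a group acting on a set fibred over a base. Therefore the number of $\Aut(U)\times\Aut(N)$-orbits on $E$, and hence the number of isomorphism types of extensions of $N$ by $U$, equals $\sum_{\varphi \in \O} o_\varphi$. I expect the main obstacle to be the bookkeeping in the second paragraph: verifying that twisting an extension by a pair $(\alpha,\beta)$ translates exactly into the stated transformations of the action and of the $2$-cocycle, independently of the chosen set-theoretic sections. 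Once that is pinned down, the remaining steps are formal.
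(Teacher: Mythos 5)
Your argument is correct, and it is essentially the folklore proof that the paper omits: Theorem \ref{Compat} is stated in the paper without any proof (``seems to be folklore''), so there is no in-paper argument to compare against; what you write is the standard compatible-pairs classification of extensions with abelian kernel (as in Robinson's survey, which the paper cites), assembled exactly as the paper's setup suggests --- the $\Aut(U)\times\Aut(N)$-action on pairs $(\varphi,c)$, equivariance of the projection to $\Pi(U,N)$, and the standard base-and-fibre orbit count identifying orbits over the class of $\varphi$ with $Comp(\varphi)$-orbits on $H^2_\varphi(U,N)$. The one step that deserves to be made explicit is the use of the hypothesis: ``$N$ is fully invariant in each extension'' literally concerns endomorphisms of a single extension group, whereas you need that any abstract isomorphism $G\ra G'$ between two \emph{different} extensions carries the embedded copy of $N$ in $G$ onto the copy in $G'$. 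This is exactly how the hypothesis is meant to be read (and holds in the paper's intended applications, where the image of $N$ is the normal Hall subgroup in the coprime case, or the preimage of the Fitting subgroup), but your phrase ``characteristic subgroup canonically attached to the extension group'' is doing that work and should be justified rather than asserted. Together with the routine check you already flag --- that the twisted cocycle formula is independent of the chosen set-theoretic sections and descends from $Z^2_\varphi$ to $H^2_\varphi$ --- the proof is complete.
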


The following theorem proved in \cite[Th. 14]{DEi05} exploits the situation 
further in a special case. Again, $C_l$ denotes the cyclic group of order $l$.

\begin{theorem} {\bf (Dietrich \& Eick \cite{DEi05})} \\
\label{cubefree}
Let $p$ be a prime, let $N \cong C_p$, and let $U$ be finite with Sylow
$p$-subgroup $P$ so that $P \cong C_p$. Then there are either one or two 
isomorphism types of extensions of $N$ by $U$. There are two isomorphism 
types of extensions if and only if $N$ and $P$ are isomorphic as 
$N_U(P)$-modules.
\end{theorem}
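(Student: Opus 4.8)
The plan is to follow the strategy behind Theorem~\ref{Compat}, applied to the single action $\varphi : U \ra \Aut(N)$ that equips $N$ with its given $U$-module structure. By the correspondence underlying that theorem, the isomorphism types of extensions of $N$ by $U$ inducing $\varphi$ (up to isomorphisms of $G$ preserving $N$ and up to automorphisms of $U$) are in bijection with the orbits of $Comp(\varphi)$ on $H^2_\varphi(U,N)$; write $o_\varphi$ for their number. So the task reduces to showing $o_\varphi \in \{1,2\}$, with $o_\varphi = 2$ exactly when $N \cong P$ as $N_U(P)$-modules, and to checking separately that when $o_\varphi = 2$ the two extensions are non-isomorphic even as abstract groups.

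First I would compute $H^2_\varphi(U,N)$. Since $N$ is a $p$-group, $H^2_\varphi(U,N)$ is a $p$-group, so restriction to the Sylow $p$-subgroup $P$ is (split) injective with image the set of stable elements of $H^2(P,N)$. As $P \cong C_p$, the only subgroups of the form $P \cap P^g$ are $P$ and $1$, so stability reduces to $N_U(P)$-invariance, and restriction gives an isomorphism $H^2_\varphi(U,N) \cong H^2(P,N)^{N_U(P)}$. Moreover $\varphi(P)=1$, because $P$ is a $p$-group while $\Aut(N) \cong C_{p-1}$ is a $p'$-group; hence $N$ is a trivial $P$-module and $H^2(P,N) = H^2(C_p,C_p) \cong C_p$. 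In particular $|H^2_\varphi(U,N)| \leq p$.

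Next I would identify the $N_U(P)$-action on $H^2(C_p,C_p) \cong \F_p$. The group $N_U(P)$ acts on $P$ by conjugation, yielding $\psi : N_U(P) \ra \Aut(P) \cong \F_p^*$, and on $N$ through $\varphi$, yielding $\varphi|_{N_U(P)} : N_U(P) \ra \Aut(N) \cong \F_p^*$; the induced action on $H^2(C_p,C_p)$ is by a scalar which, using that an automorphism $x \ms x^c$ of $C_p$ acts on $H^2(C_p,\Z) \cong C_p$ by multiplication with $c^{\pm 1}$, is trivial precisely when $\varphi(g) = \psi(g)$ for all $g \in N_U(P)$. Since $N$ and $P$ are one-dimensional over $\F_p$, this is exactly the condition that they be isomorphic as $N_U(P)$-modules. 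Thus $H^2_\varphi(U,N) \cong C_p$ if $N \cong P$ as $N_U(P)$-modules and $H^2_\varphi(U,N) = 0$ otherwise. I expect the identification of this action, together with the reduction of $H^2_\varphi(U,N)$ to the cohomology of the cyclic group $P$, to be the crux of the argument.

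It then remains to read off $o_\varphi$ and verify distinctness. If $N \not\cong P$, then $H^2_\varphi(U,N)=0$, the only extension is the split one, and $o_\varphi = 1$. If $N \cong P$, then $H^2_\varphi(U,N) \cong \F_p$; since $\Aut(N)$ is abelian we have $\ol\beta = \mathrm{id}$, so $1 \times \Aut(N) \leq Comp(\varphi)$, and via $\gamma^{(1,\beta)} = \beta^{-1} \circ \gamma$ this subgroup already acts through all scalar multiplications on $H^2_\varphi(U,N) \cong \F_p$; hence $Comp(\varphi)$ has exactly the two orbits $\{0\}$ and $\F_p \setminus \{0\}$, and $o_\varphi = 2$. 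Finally, in this case restriction is an isomorphism $H^2_\varphi(U,N) \ra H^2(P,N) \cong C_p$, so a non-zero class restricts to the class of the non-split extension $C_{p^2}$; hence in the corresponding group the preimage of $P$ is a Sylow $p$-subgroup isomorphic to $C_{p^2}$, whereas the split extension has elementary abelian Sylow $p$-subgroup $N \times P$. The two extensions are therefore not isomorphic as abstract groups, which completes the count.
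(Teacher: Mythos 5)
Your argument is correct. Note that the paper itself gives no proof of this statement: it is quoted from Dietrich \& Eick \cite{DEi05}, so there is no in-paper proof to compare against; your cohomological route is essentially a reconstruction of the original argument. The key steps all hold: $H^2_\varphi(U,N)$ has exponent $p$, so restriction to $P$ is injective with image the stable classes, and since $P\cap P^g\in\{1,P\}$ stability is just $N_U(P)$-invariance; $\varphi(P)=1$ gives $H^2(P,N)\cong C_p$; the $N_U(P)$-action is by the ratio of the two characters $\varphi$ and $\psi$, so the invariants are nonzero exactly when $N\cong P$ as $N_U(P)$-modules; and $1\times\Aut(N)\leq Comp(\varphi)$ acts by all scalars, giving exactly the orbits $\{0\}$ and the nonzero classes. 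Your final step, distinguishing the two groups by their Sylow $p$-subgroups ($C_p\times C_p$ versus $C_{p^2}$, the latter because a nonzero class has nonzero restriction to $P$), is genuinely needed, since $N$ need not be characteristic in the split extension and orbit counts a priori only bound the number of abstract isomorphism types from above; it is good that you did not simply invoke Theorem \ref{Compat} here. The one point to make precise in a full writeup is the relative orientation of the two scalars: your justification via ``$c^{\pm1}$'' fixes the action of $\Aut(P)$ only up to inversion, and the conclusion ``trivial iff $\varphi=\psi$'' (rather than $\varphi=\psi^{-1}$) depends on the kernel and quotient scalars entering with opposite exponents. This is easily pinned down, for instance by observing that for an automorphism of $C_{p^2}$ the induced scalars on the subgroup $C_p$ and on the quotient $C_p$ coincide, so classes of $C_{p^2}$-type extensions, where the two characters agree, must be fixed; with that one line added the proof is complete.
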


\subsection{A special type of split extensions}

In this section we recall a variation of a theorem by Taunt \cite{Tau54}. 
As a preliminary step we introduce some notation. Let $N$ and $U$ be 
finite solvable groups of coprime order. Let $\SS$ denote a set 
of representatives for the conjugacy classes of subgroups in $\Aut(N)$,
let $\K$ denote the set of representatives for the $\Aut(U)$-classes of 
normal subgroups in $U$ and let $\O = \{ (S, K) \mid S \in \SS, K \in \K 
\mbox{ with } S \cong U/K \}$. For $(S,K) \in \O$ let $\iota : U/K \ra
S$ denote a fixed isomorphism, let $A_K$ denote the subgroup of $\Aut(U/K)$ 
induced by the action of $Stab_{\Aut(U)}(K)$ on $U/K$, and denote with 
$A_S$ the subgroup of $\Aut(U/K)$ induced by the action of $N_{\Aut(N)}(S)$ 
on $S$ and thus, via $\iota$, on $U/K$. Then the double cosets of the 
subgroups $A_K$ and $A_S$ in $\Aut(U/K)$ are denoted by 
\[ \DC(S, K) := A_K \setminus \Aut(U/K) \; / \; A_S.\]

\begin{theorem} \label{Taunt}
Let $N$ and $U$ be finite solvable groups of coprime order. Then the number 
of isomorphism types of split extensions $N \split U$ is 
\[ \sum_{(S,K) \in \O} |\DC(S,K)|.\]
\end{theorem}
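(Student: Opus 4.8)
The plan is to enumerate the split extensions $N \rtimes U$ by first fixing the ``shape'' of the action and then counting the genuinely distinct actions of that shape, all up to isomorphism of the resulting group. Since $N$ and $U$ have coprime order, a split extension $G = N \rtimes U$ has $N$ as its unique normal Hall subgroup of that order, so $N$ is characteristic in $G$; consequently $G$ determines and is determined by the conjugacy class (in $\Aut(N)$) of the image of the action homomorphism $\varphi : U \to \Aut(N)$ together with the data of how $U$ maps onto that image. The first step is therefore to set up the correspondence: an isomorphism class of split extensions is an $\Aut(N) \times \Aut(U)$-orbit on the set $\Pi(U,N)$ of homomorphisms $U \to \Aut(N)$ (this is the well-known starting point recalled before Theorem \ref{Compat}; for split extensions one only needs the orbit count on $\Pi(U,N)$, not the cohomological refinement). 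So the number sought equals the number of such orbits, and the whole proof is a reorganisation of that orbit count.

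Second, I would decompose an orbit representative $\varphi$ by its image and its kernel. Write $S = \mathrm{im}(\varphi) \leq \Aut(N)$ and $K = \ker(\varphi) \trianglelefteq U$; then $\varphi$ factors as $U \twoheadrightarrow U/K \xrightarrow{\ \sim\ } S$. Acting by $\Aut(N) \times \Aut(U)$ we may move $S$ into the chosen transversal $\SS$ of conjugacy classes of subgroups of $\Aut(N)$ and $K$ into the chosen transversal $\K$ of $\Aut(U)$-classes of normal subgroups of $U$; the pair $(S,K)$ that results is well-defined on the orbit precisely because conjugating $S$ in $\Aut(N)$ and applying an automorphism of $U$ is exactly the residual freedom, and it must lie in $\O$ since $S \cong U/K$. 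This shows the orbits partition according to $(S,K) \in \O$, and it remains to count, for each fixed $(S,K)$, the orbits of the stabiliser on the fibre.

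Third — and this is the computational heart — I would fix $(S,K) \in \O$ and the reference isomorphism $\iota : U/K \to S$, and identify the set of $\varphi$'s with image conjugate to $S$ and kernel $\Aut(U)$-equivalent to $K$ with a single copy of $\Aut(U/K)$: any such $\varphi$, after the normalising moves above, is $\iota \circ \theta \circ (\text{quotient map})$ for a uniquely determined $\theta \in \Aut(U/K)$. Two such data give isomorphic groups iff one is carried to the other by an element of $\Aut(N) \times \Aut(U)$ that preserves $S$ and $K$; tracking what such an element does to $\theta$, the $\Aut(U)$-side contributes precisely left multiplication by the automorphisms of $U/K$ induced from $\mathrm{Stab}_{\Aut(U)}(K)$, i.e.\ by $A_K$, while the $\Aut(N)$-side contributes right multiplication by the automorphisms of $U/K$ induced (through $\iota$) from $N_{\Aut(N)}(S)$, i.e.\ by $A_S$. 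Hence the isomorphism classes with this $(S,K)$ are in bijection with the double cosets $A_K \backslash \Aut(U/K) / A_S = \DC(S,K)$, and summing over $(S,K) \in \O$ gives the claimed formula.

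The main obstacle I expect is the bookkeeping in the third step: verifying that the two one-sided multiplication actions are exactly by $A_K$ and $A_S$ (no more, no less), and that the normalising choices in step two do not introduce over- or under-counting — in other words, that the passage ``orbit $\leadsto$ $(S,K)$ $\leadsto$ double coset'' is a genuine bijection and not merely a surjection. The coprimality hypothesis is used twice and both uses must be made explicit: once to know $N$ is characteristic (so that isomorphisms of $G$ respect the decomposition at all), and once, via a Schur--Zassenhaus / coprime-action argument, to know that a map $U/K \to S$ actually extends to the required element of $\Aut(N)$ acting as conjugation, so that $N_{\Aut(N)}(S)$ surjects onto $A_S$ as claimed. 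Once these points are nailed down the formula $\sum_{(S,K)\in\O} |\DC(S,K)|$ drops out; this is essentially Taunt's theorem reorganised, so I would also remark that the classical statement is recovered when $U$ is abelian (then $\K$ is just the set of subgroups of $U$ up to $\Aut(U)$).
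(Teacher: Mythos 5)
Your proposal is correct and follows essentially the same route as the paper, which simply cites Taunt's theorem for the correspondence between isomorphism types of split extensions $N \split U$ and $\Aut(U)\times\Aut(N)$-orbits on $\Pi(U,N)$, and then translates the orbits lying over a fixed pair $(S,K)$ into the double cosets $A_K \setminus \Aut(U/K) / A_S$; you merely spell out the bookkeeping the paper leaves implicit. One small correction to your closing remarks: the second use of coprimality is the conjugacy of complements (Schur--Zassenhaus), needed so that an isomorphism of two split extensions can be adjusted to carry complement to complement and hence induce a pair in $\Aut(U)\times\Aut(N)$; since $A_S$ is by definition the image of $N_{\Aut(N)}(S)$ in $\Aut(U/K)$, no surjectivity onto $A_S$ needs to be proved.
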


\begin{proof}
Taunt's theorem \cite{Tau54} claims that the number of isomorphism types of
split extensions $N \split U$ correspond to the orbits of $\Aut(U) \times
\Aut(N)$ on $\Pi(U,N)$. In turn, these orbits correspond to the union of 
orbits of $A_K \times N_{\Aut(N)}(S)$ on the set of isomorphisms $U/K \ra 
S$. The latter translate to the double cosets $\DC(S,K)$.
\end{proof}

We apply Theorem \ref{Taunt} in two special cases in the following. 
Again, let $C_l$ denote the cyclic group of order $l$.

\begin{theorem} \label{Taunt1}
Let $q^k$ be a prime-power, let $N = C_{q^k}$ and let $U$ be a finite group 
of order coprime to $q$. Denote $\pi = \gcd(|U|, q^{k-1}(q-1))$.
For $l \mid \pi$, let $\K_l$ be a set of representatives of the 
$Aut(U)$-classes of normal subgroups $K$ in $U$ with $U/K \cong C_l$. 
For $K \in \K_l$ let $\ind_K = [\Aut(U/K) : A_K]$. If $k \leq 2$ or 
$q$ is odd, then the number of isomorphism types of split extensions 
$N \split U$ is 
\[ \sum_{l \mid \pi} \sum_{K \in \K_l}  \ind_K. \]
\end{theorem}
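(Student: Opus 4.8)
The plan is to specialize Theorem~\ref{Taunt} to the situation $N = C_{q^k}$. The key observations are: first, a cyclic group of prime-power order $q^k$ has a very simple automorphism group, and second, since $N$ is cyclic its subgroup lattice is a chain, so every subgroup of $\Aut(N)$ that can occur as $U/K$ is itself cyclic of some order $l$. Concretely, $\Aut(C_{q^k})$ is cyclic of order $q^{k-1}(q-1)$ when $q$ is odd (or when $k \le 2$), and in all these cases it has a unique subgroup of each order $l \mid q^{k-1}(q-1)$. Since $U/K$ must be isomorphic to such a subgroup $S$, and such an $S$ is cyclic of order $l$, the index $l$ must divide $|U|$ as well, hence $l \mid \pi = \gcd(|U|, q^{k-1}(q-1))$. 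So the outer sum over $(S,K) \in \O$ reorganizes as a sum over $l \mid \pi$ and then over the $\Aut(U)$-classes $K \in \K_l$ of normal subgroups with $U/K \cong C_l$.

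Next I would compute $|\DC(S,K)| = |A_K \backslash \Aut(U/K) / A_S|$ in this setting. The crucial simplification is that $S$ is cyclic of order $l$, so $\Aut(U/K) \cong \Aut(C_l)$ is abelian. Therefore the double cosets $A_K \backslash \Aut(U/K) / A_S$ are just cosets of the subgroup $A_K A_S$, and $|\DC(S,K)| = [\Aut(U/K) : A_K A_S]$. Now I claim $A_S = \Aut(U/K)$, i.e. $N_{\Aut(N)}(S)$ induces the full automorphism group of $S \cong C_l$ via $\iota$. This is where the hypothesis ``$k \le 2$ or $q$ odd'' is used: in those cases $\Aut(N) = \Aut(C_{q^k})$ is cyclic, and the unique subgroup $S$ of order $l$ is characteristic, so $N_{\Aut(N)}(S) = \Aut(N)$; moreover a cyclic group acting on its unique cyclic subgroup of order $l$ realizes — since $\Aut(N)$ is cyclic of order divisible by $l$ and $l\mid q^{k-1}(q-1)$ — the full automorphism group $\Aut(C_l)$ (an element generating $\Aut(C_{q^k})$ restricts to a generator of $\mathrm{Aut}$ of the $C_l$ inside $C_{q^k}$, because the action is by the natural multiplication). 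Hence $A_S A_K = \Aut(U/K)$, so $|\DC(S,K)| = [\Aut(U/K):A_K] = \ind_K$.

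Putting these together, $\sum_{(S,K)\in\O} |\DC(S,K)| = \sum_{l\mid\pi}\sum_{K\in\K_l} \ind_K$, which is the claimed formula.

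The main obstacle I anticipate is verifying cleanly that $A_S = \Aut(U/K)$, i.e. that the normalizer in $\Aut(N)$ of the order-$l$ subgroup $S$ induces \emph{all} of $\Aut(S)$. This hinges precisely on $\Aut(C_{q^k})$ being cyclic, which fails exactly for $q = 2$ and $k \ge 3$ (where $\Aut(C_{2^k}) \cong C_2 \times C_{2^{k-2}}$) — this is why that case is excluded. One must also be slightly careful that, even when $\Aut(N)$ is cyclic, its restriction map to $\Aut(S)$ is surjective; this follows because the restriction $\Aut(C_{q^k}) \to \Aut(C_l)$ (both being unit groups $(\Z/q^k)^* \to (\Z/l)^*$ via reduction, for $l \mid q^{k-1}(q-1)$ a divisor realized as the order of the subgroup) is surjective on the relevant cyclic groups. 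A secondary point worth stating is that distinct $l$ give disjoint contributions and that every pair in $\O$ arises this way, which is immediate from the chain structure of the subgroup lattice of $N$.
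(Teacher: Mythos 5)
Your overall skeleton matches the paper's proof (specialize Theorem~\ref{Taunt}, use that $\Aut(C_{q^k})$ is cyclic so there is a unique subgroup $S$ of each order $l \mid \pi$, and reorganize the sum over pairs $(S,K)$ as a sum over $l$ and $K \in \K_l$), but the key computation of $A_S$ is wrong. In the setup of Theorem~\ref{Taunt}, $A_S$ is the subgroup of $\Aut(U/K)$ induced by the action of $N_{\Aut(N)}(S)$ on $S$, and since $S$ is a subgroup of $\Aut(N)$, that action is \emph{conjugation inside $\Aut(N)$}. You instead treat $S$ as if it were the subgroup $C_l$ of $N=C_{q^k}$ and argue that automorphisms of $N$ restrict to all of $\Aut(C_l)$ (``the action is by the natural multiplication'', reduction of unit groups, etc.); this conflates $S \leq \Aut(N)$ with a subgroup of $N$ (note also that subgroups of $C_{q^k}$ have $q$-power order, whereas $l$ is an arbitrary divisor of $q^{k-1}(q-1)$). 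The correct statement, and the one the paper uses, is the opposite extreme: because $\Aut(N)$ is abelian here, the conjugation action of $N_{\Aut(N)}(S)=\Aut(N)$ on $S$ is trivial, so $A_S = 1$, and hence $|\DC(S,K)| = [\Aut(U/K):A_K] = \ind_K$.

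This is not a cosmetic slip, because your claim $A_S = \Aut(U/K)$ actually contradicts the theorem: with $A_S$ the full group and $\Aut(U/K)$ abelian, the double cosets are the cosets of $A_KA_S=\Aut(U/K)$, so $|\DC(S,K)|$ would be $1$ for every pair, and the count would be the number of pairs $(l,K)$ rather than $\sum \ind_K$ (compare the case $U$ extraspecial of exponent $p^2$ in the proof of Theorem~\ref{thp3q}, where one class $K$ has $\ind_K = p-1 > 1$). Your final line ``$A_SA_K=\Aut(U/K)$, so $|\DC(S,K)|=[\Aut(U/K):A_K]$'' is therefore a non sequitur that happens to land on the right formula. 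A related misattribution: the hypothesis ``$k \leq 2$ or $q$ odd'' is not needed to make any restriction map surjective; it is needed so that $\Aut(C_{q^k})$ is cyclic and hence has a \emph{unique} subgroup of each order $l$, which is what lets you index the pairs $(S,K)$ by $l\mid\pi$ alone. (For $q=2$, $k\geq 3$, $\Aut(N)\cong C_2\times C_{2^{k-2}}$ is still abelian, so $A_S$ would still be trivial, but there are several subgroups of order $2$, so the bookkeeping by $l$ breaks down.)
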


\begin{proof}
We apply Theorem \ref{Taunt}.
The group $\Aut(N)$ is cyclic of order
$p^{k-1}(p-1)$. Hence for each $l \mid \pi$ there exists a unique 
subgroup $S$ of order $l$ in $\Aut(N)$ and this subgroup is cyclic. 
Next, as $\Aut(N)$ is abelian, it follows that $N_{\Aut(N)}(S) 
= \Aut(N)$ and $\Aut(N)$ acts trivially on $S$ by conjugation. Hence 
$A_S$ is the trivial group and $|\DC(S,K)| = \ind_K$ for each $K$.
\end{proof}

\begin{theorem} \label{Taunt2}
Let $q^k$ be a prime-power, let $U = C_{q^k}$ and let $N$ be a finite group 
of order coprime to $q$. Let $\SS$ be a set of conjugacy class representatives
of cyclic subgroups of order dividing $q^k$ in $\Aut(N)$. Then the number
of isomorphism types of split extensions $N \split U$ equals $|\SS|$.
\end{theorem}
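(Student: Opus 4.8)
The plan is to invoke Theorem~\ref{Taunt}. Although that theorem is phrased for solvable $N$, the property it actually uses is that the isomorphism types of split extensions $N \split U$ are in bijection with the $\Aut(U) \times \Aut(N)$-orbits on $\Pi(U,N)$; this holds whenever $N$ is characteristic in every split extension $N \split U$, and for $U = C_{q^k}$ of order coprime to $|N|$ the subgroup $N$ is a normal Hall subgroup, hence characteristic. (Together with the Schur--Zassenhaus theorem, applicable since $U$ is solvable, the isomorphism classification of such extensions then reduces to these orbits in the usual way.) So Theorem~\ref{Taunt} is available with $N$ and $U$ as given.

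Next I would read off the combinatorial data entering Theorem~\ref{Taunt}. Since $U = C_{q^k}$ is cyclic of prime-power order, every one of its subgroups is characteristic; hence the set $\K$ of $\Aut(U)$-class representatives of normal subgroups of $U$ is simply the chain of all $k+1$ subgroups, and for each $j$ with $0 \le j \le k$ there is a unique $K \in \K$ with $U/K \cong C_{q^j}$, namely the subgroup of order $q^{k-j}$. A cyclic subgroup $S$ of $\Aut(N)$ of order dividing $q^k$ has order $q^j$ for some $0 \le j \le k$, and pairing it with the unique $K$ of index $q^j$ gives a bijection $\O \to \SS$, $(S,K) \mapsto S$; in particular $|\O| = |\SS|$. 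It therefore remains to show $|\DC(S,K)| = 1$ for every $(S,K) \in \O$.

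The crux is to prove that $A_K = \Aut(U/K)$. As $K$ is characteristic in $U$ we have $\mathrm{Stab}_{\Aut(U)}(K) = \Aut(U)$, so $A_K$ is exactly the image of the natural map $\Aut(C_{q^k}) \to \Aut(C_{q^j})$ obtained by restricting to the quotient $U/K \cong C_{q^j}$. Identifying $\Aut(C_{q^k})$ with $(\Z/q^k\Z)^{*}$ and $\Aut(C_{q^j})$ with $(\Z/q^j\Z)^{*}$, this map is reduction modulo $q^j$, which is surjective. Hence $A_K = \Aut(U/K)$, and consequently the double coset set $\DC(S,K) = A_K \setminus \Aut(U/K) / A_S$ consists of a single element, whatever the subgroup $A_S$ may be. Summing over $\O$ in Theorem~\ref{Taunt} then yields that the number of isomorphism types of split extensions $N \split U$ equals $\sum_{(S,K) \in \O} 1 = |\O| = |\SS|$.

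The only step that is more than bookkeeping is the surjectivity of $(\Z/q^k\Z)^{*} \to (\Z/q^j\Z)^{*}$, equivalently of $\Aut(C_{q^k}) \to \Aut(C_{q^j})$: this is the main (and entirely standard) obstacle, and it is what forces $A_K$ to be all of $\Aut(U/K)$ so that the double cosets collapse to one. A minor secondary point is the verification sketched in the first paragraph that Theorem~\ref{Taunt} may be applied here without separately assuming $N$ solvable.
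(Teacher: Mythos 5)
Your proposal is correct and follows essentially the same route as the paper: apply Theorem~\ref{Taunt}, observe that the normal subgroups of $U=C_{q^k}$ form a chain pairing bijectively with $\SS$, and note that $A_K=\Aut(U/K)$ (via surjectivity of $\Aut(C_{q^k})\to\Aut(C_{q^j})$), so every double coset set $\DC(S,K)$ has size one. Your extra remarks justifying the use of Theorem~\ref{Taunt} without assuming $N$ solvable are a welcome bit of added care, but they do not change the argument, which is the paper's.
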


\begin{proof}
Again we use Theorem \ref{Taunt}.
For each divisor $p^l$ of $p^k$ there exists a unique normal subgroup $K$ 
in $U$ with $|K| = p^l$ and $U/K$ is cyclic of order $p^{k-l}$. Note that 
$A_K = \Aut(U/K)$ for each such $K$. Hence $|\DC(S,K)| = 1$ in all cases.
\end{proof}

\section{Groups of order $p^2q$}

The groups of order $p^2 q$ have been considered by H\"older 
\cite{Hol93}, by Lin \cite{Lin74}, by Laue \cite{Lau82} and in various
other places. The results by H\"older, Lin and Laue agree with ours. 
(Lin's results have some harmless typos). We also refer to 
\cite[Prop. 21.17]{BNV07} for an alternative description and proof of 
the following result.

\begin{theorem} \label{thp2q} 
Let $p$ and $q$ be different primes.
\begin{items}
\item[\rm (a)]
If $q = 2$, then $\GN(p^2 q) = 5$.
\item[\rm (b)]
If $q > 2$, then 
\[ \GN(p^2 q) = 2 + (q+5)/2 \cdot \w_{p-1}(q) + \w_{p+1}(q) 
       + 2 \w_{q-1}(p) + \w_{q-1}(p^2).\]
\end{items}
\end{theorem}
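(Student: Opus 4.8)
The plan is to enumerate the groups of order $p^2q$ by a Sylow analysis, splitting into cases according to which Sylow subgroup is normal. Let $G$ have order $p^2q$ with Sylow $p$-subgroup $P$ (order $p^2$) and Sylow $q$-subgroup $Q$ (order $q$). By Sylow's theorems at least one of $P$, $Q$ is normal (a counting argument: if neither were normal we would get $n_q = p^2$ subgroups of order $q$ contributing $p^2(q-1)$ elements of order $q$, leaving only $p^2$ elements for a unique, hence normal, Sylow $p$-subgroup -- contradiction). So every such $G$ is an extension, and in fact every such $G$ is solvable. The two main cases are: (i) $P \trianglelefteq G$, so $G$ is an extension of $P$ by $Q$; since $\gcd(p^2,q)=1$, by Schur--Zassenhaus this extension splits, $G = P \split Q$; (ii) $Q \trianglelefteq G$ and $P$ not normal, so $G = Q \split P$ is again split by Schur--Zassenhaus. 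I would count case (i), then count case (ii), then subtract the overlap (groups where both $P$ and $Q$ are normal, i.e. $G = P \times Q$).

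For case (i), $G = P \split Q$ with $P$ abelian of order $p^2$ and $Q = C_q$ cyclic. Here I would invoke Theorem \ref{Taunt2} with the roles $U = C_q$ and $N = P$: the number of isomorphism types of such split extensions equals the number of conjugacy classes of cyclic subgroups of order dividing $q$ in $\Aut(P)$. There are two choices for $P$: $P \cong C_{p^2}$ with $\Aut(P) \cong C_{p(p-1)}$ cyclic, contributing $1 + \w_{p-1}(q)$ (the trivial subgroup plus, when $q \mid p-1$, the unique subgroup of order $q$); and $P \cong C_p^2$ with $\Aut(P) \cong \GL(2,p)$, contributing $1 + s_q(\GL(2,p))$. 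Using Theorem \ref{linear2}(a), $s_q(\GL(2,p)) = \frac{q+3}{2}\w_{p-1}(q) + \w_{p+1}(q)$ for $q > 2$ and $s_2(\GL(2,p)) = 2$. For case (ii), $G = C_q \split P$ with $P$ acting nontrivially on $C_q$ (the trivial action gives $P \times C_q$, which is the overlap). By Theorem \ref{Taunt1} with $N = C_q$, $U = P$, $k=1$: the count is $\sum_{l \mid \gcd(|P|, q-1)} \sum_{K \in \K_l} \ind_K$, where $l$ ranges over divisors of $\gcd(p^2, q-1) \in \{1, p, p^2\}$. The $l=1$ term is the overlap $P \times C_q$ and gets set aside. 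The $l = p$ term requires $p \mid q-1$ and counts $\Aut(P)$-classes of normal subgroups $K \trianglelefteq P$ with $P/K \cong C_p$ weighted by the index $[\Aut(C_p):A_K]$; for $P = C_{p^2}$ there is one such $K$ (the subgroup of order $p$) and for $P = C_p^2$ there is one $\Aut(P)$-class of such $K$, so this contributes $2\w_{q-1}(p)$ once one checks the index is $1$ in each case (since $A_K$ induces the full $\Aut(C_p)$). The $l = p^2$ term requires $p^2 \mid q-1$, forces $P$ cyclic $C_{p^2}$, and contributes $\w_{q-1}(p^2)$.

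Assembling: the non-overlap count for case (i) is $2 + \w_{p-1}(q) + s_q(\GL(2,p))$ (the two ``$1$''s for trivial actions are the two abelian-times-cyclic direct products $C_{p^2}\times C_q$ and $C_p^2 \times C_q$, i.e. exactly the overlap with case (ii)) -- so after removing the overlap, case (i) genuinely new groups number $\w_{p-1}(q) + s_q(\GL(2,p))$, while the overlap itself contributes $2$ groups, and case (ii) genuinely new groups number $2\w_{q-1}(p) + \w_{q-1}(p^2)$. Hence for $q > 2$,
\[ \GN(p^2q) = 2 + \w_{p-1}(q) + \frac{q+3}{2}\w_{p-1}(q) + \w_{p+1}(q) + 2\w_{q-1}(p) + \w_{q-1}(p^2) = 2 + \frac{q+5}{2}\w_{p-1}(q) + \w_{p+1}(q) + 2\w_{q-1}(p) + \w_{q-1}(p^2), \]
matching (b). For $q = 2$: then $p$ is odd, so $\w_{q-1}(p) = \w_{q-1}(p^2) = 0$ and $\w_{p+1}(2) = 1$; also $q \mid p-1$ always, so $\w_{p-1}(2) = 1$, but now one must use $s_2(\GL(2,p)) = 2$ rather than the $q>2$ formula. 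This gives $\GN(4p) = 2 + 1 + 2 + 1 + 0 + 0 = 5$... wait, I must recount: the genuinely-new case (i) count is $\w_{p-1}(2) + s_2(\GL(2,p)) = 1 + 2 = 3$, plus overlap $2$, plus case (ii) new $= 0$, total $5$, confirming (a). (The discrepancy with naively plugging $q=2$ into (b) is exactly why $q=2$ is stated separately.)

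The main obstacle I expect is the careful bookkeeping of the overlap and of the index terms $\ind_K = [\Aut(P/K):A_K]$ in case (ii): one must verify that $Stab_{\Aut(P)}(K)$ induces \emph{all} of $\Aut(P/K) \cong \Aut(C_p)$ for each relevant $K$, both when $P = C_{p^2}$ and when $P = C_p^2$, so that each such $K$ contributes exactly $1$ and the total is $2\w_{q-1}(p)$ rather than something smaller. A secondary subtlety is making sure the two "trivial homomorphism" contributions in case (i) are correctly identified with the two direct products and not double-counted against case (ii). Everything else is a routine application of Theorems \ref{Taunt1}, \ref{Taunt2}, \ref{linear2} together with the Sylow/Schur--Zassenhaus reduction.
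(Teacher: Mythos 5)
Your proposal is correct and follows essentially the same route as the paper: the count is organised by which Sylow subgroup is normal, with Theorem \ref{Taunt2} and Theorem \ref{linear2}(a) handling the normal Sylow $p$-case, Theorem \ref{Taunt1} handling the normal Sylow $q$-case, and the two direct products accounting for the overlap. The only (harmless) difference is that you exclude groups with no normal Sylow subgroup by the classical Sylow counting argument (where one should spell out why $n_q=p$ is impossible), whereas the paper uses Burnside solvability together with a Fitting subgroup argument.
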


\begin{proof}
The classification of groups of order $p^2$ yields that there are two 
nilpotent groups of order $p^2 q$: the groups $C_{p^2} \times C_q$ and 
$C_p^2 \times C_q$. It remains to consider the non-nilpotent groups of 
the desired order. Note that
every group of order $p^2 q$ is solvable by Burnside's theorem.
\medskip

{\em Non-nilpotent groups with normal $p$-Sylow subgroup.} 
These groups have the form 
$N \split U$ with $|N| = p^2$ and $U = C_q$. We use Theorem \ref{Taunt2} 
to count the number of such split extensions. Thus we count the number 
of conjugacy classes of subgroups of order $q$ in $Aut(N)$. 
\begin{items}
\item[$\bullet$]
$N \cong C_p^2$. Then $\Aut(N) \cong GL(2,p)$ and the number of conjugacy 
classes of subgroups of order $q$ in $\Aut(N)$ is exhibited in Theorem 
\ref{linear2} (a).
\item[$\bullet$]
$N \cong C_{p^2}$. Then $\Aut(N) \cong C_{p (p-1)}$ is cyclic. Thus there 
is at most one subgroup of order $q$ in $Aut(N)$ and this exists if and 
only if $q \mid (p-1)$.
\end{items}
\medskip

{\em Non-nilpotent groups with normal $q$-Sylow subgroup.} 
These groups have the form 
$N \split U$ with $N = C_q$ and $|U| = p^2$. We use Theorem \ref{Taunt1} 
to count the number of such split extensions. For this purpose we have 
to consider the $\Aut(U)$-classes of proper normal subgroups $K$ in $U$ 
with $U/K$ cyclic.
\begin{items}
\item[$\bullet$]
$U \cong C_{p^2}$. Then $U$ has a two proper normal subgroups $K$ with 
cyclic quotient. The case $K = 1$ arises if and only if $p^2 \mid (q-1)$ 
and the case $K = C_p$ arises if and only if $p \mid (q-1)$. In both cases 
$\ind_K = 1$.
\item[$\bullet$]
$U \cong C_p^2$. Then there exists one $\Aut(U)$-class of normal subgroups 
$K$ with cyclic quotient in $U$ and this has the form $K = C_p$ and yields
$\ind_K = 1$. This case arises if $p \mid (q-1)$.
\end{items}
\medskip

{\em Groups without normal Sylow subgroup.} Let $G$ be such a group and 
let $F$ be the Fitting subgroup of $G$. As $G$ is solvable and non-nilpotent, 
it follows that $1 < F < G$. As $G$ has no normal Sylow subgroup, we obtain
that $pq \mid [G : F]$. Thus $|F| = p$ and $|G/F| = pq$ is the only option.
Next, $G/F$ acts on faithfully on $F$ by conjugation, since $F$ is the 
Fitting subgroup. Hence $pq \mid |Aut(F)| = (p-1)$ and this is a contradiction. 
Thus this case cannot arise.
\end{proof}

\section{Groups of order $p^3q$}

The groups of order $p^3 q$ have been determined by Western \cite{Wes99} 
and Laue \cite{Lau82}. Western's paper is essentially correct,
but the final summary table of groups has a group missing in the case that
$q \equiv 1 \bmod p$; the missing group appears in Western's analysis in 
Section 13. There are further minor issues in Section 32 of Western's paper.
There are disagreements between our results and the results of Laue
\cite[p. 224-6]{Lau82} for the case $p=2$ and the case $q=3$. We have not 
tried to track the origin of these in Laue's work. 

\begin{theorem} \label{thp3q}
Let $p$ and $q$ be different primes.
\begin{items}
\item[\rm (a)]
There are two special cases
$\GN(3 \cdot 2^3) = 15$ and $\GN(7 \cdot 2^3) = 13$.
\item[\rm (b)]
If $q = 2$, then $\GN(p^3 q) = 15$ for all $p > 2$.
\item[\rm (c)]
If $p = 2$, then $\GN(p^3 q) = 12 + 2 \w_{q-1}(4) + \w_{q-1}(8)$ for all 
$q \neq 3,7$.
\item[\rm (d)]
If $p$ and $q$ are both odd, then 
\begin{eqnarray*}
\GN(p^3 q) &=& 5 + (q^2 + 13 q + 36)/6 \cdot \w_{p-1}(q) \\
           && + (p+5) \cdot \w_{q-1}(p) \\
           && + 2/3 \cdot \w_{q-1}(3) \w_{p-1}(q)  
              + \w_{(p+1)(p^2+p+1)}(q)(1- \w_{p-1}(q)) \\
           && + \w_{p+1}(q) + 2 \w_{q-1}(p^2) + \w_{q-1}(p^3).
\end{eqnarray*}
\end{items}
\end{theorem}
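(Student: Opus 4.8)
The plan is to follow the structure of the proof of Theorem~\ref{thp2q}. Every group of order $p^3q$ is solvable by Burnside's theorem, and I would split the groups into four pairwise disjoint and exhaustive classes: the nilpotent groups, the non-nilpotent groups with normal Sylow $p$-subgroup, the non-nilpotent groups with normal Sylow $q$-subgroup, and the groups with no normal Sylow subgroup; then $\GN(p^3q)$ is the sum of the four class sizes. The nilpotent groups are the products $P\times C_q$ with $|P|=p^3$, so there are $\GN(p^3)\cdot\GN(q)=5$ of them.

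A non-nilpotent group with normal Sylow $p$-subgroup has the form $N\split C_q$ with $|N|=p^3$; since $N$ is the unique Sylow $p$-subgroup it is an isomorphism invariant, and every such extension splits by Schur--Zassenhaus, so by Theorem~\ref{Taunt2} (discarding the trivial homomorphism, which gives the already-counted direct product) the number of such groups is $\sum_N s_q(\Aut N)$, the sum running over the five isomorphism types $N$ of groups of order $p^3$. I would then determine $\Aut N$ and $s_q(\Aut N)$ in each case. If $N\cong C_{p^3}$ then $\Aut N$ is cyclic; if $N\cong C_p^3$ then $\Aut N\cong\GL(3,p)$ and Theorem~\ref{linear3} applies; if $N\cong C_{p^2}\times C_p$ then the action of $\Aut N$ on $N/\Phi(N)$ induces a Borel subgroup of $\GL(2,p)$ with a $p$-group as kernel, so by coprimality $s_q(\Aut N)$ equals the number of conjugacy classes of subgroups of order $q$ in that Borel (which one computes to be $(q+1)\,\w_{p-1}(q)$ for $q>2$, and $3$ for $q=2$); for odd $p$, the automorphism group of the non-abelian group of order $p^3$ and exponent $p$ is an extension of a $p$-group by $\GL(2,p)$, so $s_q$ reduces to $s_q(\GL(2,p))$ via Theorem~\ref{linear2}(a), while that of the non-abelian group of exponent $p^2$ is an extension of a $p$-group by $C_{p-1}$, so $s_q$ is $\w_{p-1}(q)$; for $p=2$ one uses $\Aut(C_8)\cong\Aut(C_4\times C_2)\cong\Aut(D_8)\cong D_8$ and $\Aut(Q_8)\cong S_4$. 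Adding and simplifying these contributions produces the $\w_{p-1}(q)$-polynomial, the $\w_{q-1}(3)\w_{p-1}(q)$ term, the $\w_{(p+1)(p^2+p+1)}(q)(1-\w_{p-1}(q))$ term and the $\w_{p+1}(q)$ term of part~(d); for $p=2$ (so $q$ odd) only $\GL(3,2)$ and $\Aut(Q_8)\cong S_4$ can contribute, yielding an extra group exactly when $q=7$ and when $q=3$ respectively, and for $q=2$ the total is $10$.

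A non-nilpotent group with normal Sylow $q$-subgroup has the form $C_q\split U$ with $|U|=p^3$; the Sylow $q$-subgroup is an invariant, and Theorem~\ref{Taunt1} applies with $k=1$ (so its hypothesis $k\le 2$ holds), giving the count $\sum_U\sum_{1<l\mid\gcd(p^3,\,q-1)}\sum_{K\in\K_l}\ind_K$, where $K$ runs through $\Aut(U)$-classes of normal subgroups with $U/K\cong C_l$ and $\ind_K=[\Aut(C_l):A_K]$. For each of the five $U$ I would list these quotients and compute $\ind_K$. For $U\cong C_{p^3}$ there is a unique $K$ with cyclic quotient of each order $p,p^2,p^3$, each with $\ind_K=1$; for $U\cong C_p^3$ and for $U$ non-abelian of exponent $p$ only $l=p$ occurs, with a single $\Aut(U)$-orbit and $\ind_K=1$; for $U\cong C_{p^2}\times C_p$ the order-$p$ quotients split into two $\Aut(U)$-orbits (the socle and the cyclic maximal subgroups), each with $\ind_K=1$, and there is one orbit of order-$p^2$ quotients with $\ind_K=1$. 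The decisive case is $U$ non-abelian of exponent $p^2$: the order-$p$ quotients again split into two $\Aut(U)$-orbits, but every automorphism fixing the cyclic maximal subgroup of order $p^2$ induces the identity on the corresponding order-$p$ quotient, so there $A_K=1$ and $\ind_K=p-1$, and this is what produces the term linear in $p$. Adding the $\w_{q-1}(p)$-coefficients gives $1+1+2+1+p=p+5$, with the $\w_{q-1}(p^2)$-coefficient $2$ and the $\w_{q-1}(p^3)$-coefficient $1$; for $q=2$ one has $\gcd(p^3,q-1)=1$, so this class is empty, and for $p=2$ one redoes the tally over $U\in\{C_8,C_4\times C_2,C_2^3,D_8,Q_8\}$ with only $l=2$.

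Finally, for a group $G$ with no normal Sylow subgroup I would use the Fitting subgroup $F$: since the Sylow $q$-subgroup has order $q$ and is not normal, $O_q(G)=1$, hence $F=O_p(G)$ has order $p$ or $p^2$. If $|F|=p$, then $G/F$ of order $p^2q$ would act faithfully on $F$, forcing $p^2q\mid p-1$, which is impossible. If $|F|=p^2$, then $G/F$ of order $pq$ acts faithfully on $F$; if the Sylow $p$-subgroup of $G/F$ were normal, its preimage would be a normal Sylow $p$-subgroup of $G$, so $G/F$ must be the non-abelian group $C_q\split C_p$ with $p\mid q-1$, and it embeds into $\Aut(F)$. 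This is impossible for $F\cong C_{p^2}$ (cyclic automorphism group) and, for $F\cong C_p^2$, requires an element of order $q$ in $\GL(2,p)$ with $p\mid q-1$; since $p\mid q-1$ forces $q>p$ while an element of order $q$ needs $q\mid(p-1)(p+1)$, this only happens when $q\mid p+1$ and $q=p+1$, i.e.\ $(p,q)=(2,3)$. In that case a short extension count (the split part by Theorem~\ref{Taunt}, the non-split part by an $H^2$-computation) shows exactly one group arises, namely $S_4$. Collecting the four contributions then gives (d) for odd $p,q$, the value $5+10=15$ for $q=2$ and $p>2$, and, for $p=2$, the formula (c) together with the exceptional values at $q=3$ and $q=7$; I would finish by cross-checking the small cases against the Small Groups library. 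I expect the main obstacle to be the automorphism-group bookkeeping for $C_{p^2}\times C_p$ and the two non-abelian groups of order $p^3$ --- in particular noticing the non-surjective $A_K$ for the modular group, which accounts for the $p+5$ coefficient --- together with keeping straight the many small-prime exceptions ($p=2$; $q\in\{2,3\}$; $q\in\{3,7\}$) and the emptiness of the last class.
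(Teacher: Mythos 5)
Your proposal follows the paper's proof essentially step for step: the same four-way decomposition (nilpotent groups; normal Sylow $p$-subgroup; normal Sylow $q$-subgroup; no normal Sylow subgroup), the same use of Theorems \ref{Taunt1} and \ref{Taunt2} combined with the subgroup counts of Theorems \ref{linear2} and \ref{linear3}, the same automorphism-group bookkeeping over the five groups of order $p^3$ --- including the decisive observation that for the group of exponent $p^2$ the cyclic maximal subgroups form one $\Aut(U)$-orbit with $A_K$ trivial, hence $\ind_K=p-1$, which is exactly how the paper obtains the coefficient $p+5$ --- and the same Fitting-subgroup argument reducing the last class to $(p,q)=(2,3)$. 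Your small variations (viewing the image of $\Aut(C_{p^2}\times C_p)$ in $\GL(2,p)$ as a Borel subgroup, and using $C_G(F)\le F$ rather than the faithful action of $G/F$ on $F/\phi(F)$) lead to the same counts.

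There is one concrete slip, in the $p=2$ bookkeeping for groups with normal Sylow $2$-subgroup: you pair $\GL(3,2)$ with $q=7$ and $\Aut(Q_8)$ with $q=3$ ``respectively'', but $\GL(3,2)$ has order $2^3\cdot 3\cdot 7$ and also contains one conjugacy class of subgroups of order $3$, contributing the group $C_2^3\rtimes C_3$ at $q=3$. With that class omitted your tally for order $24$ reads $5+1+7+1=14$ rather than the asserted $15$; the three exceptional groups at $(p,q)=(2,3)$ beyond the generic formula are $C_2^3\rtimes C_3$, $Q_8\rtimes C_3$ and $S_4$. This is an oversight in the prose rather than in the method, since your own machinery produces the missing class (Theorem \ref{linear3} at $p=2$, $q=3$ gives exactly one class of order $3$), and your intended cross-check against the Small Groups library would catch it. Similarly, when you ``redo the tally with only $l=2$'' for $p=2$, make explicit that the $l=4$ and $l=8$ cyclic quotients of $C_8$ and $C_4\times C_2$ are retained, as they supply the terms $2\w_{q-1}(4)+\w_{q-1}(8)$ of part (c).
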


Before we embark on the proof of Theorem \ref{thp3q}, we note that the 
formula of Theorem \ref{thp3q} d) can be simplified by 
distinguishing the cases $q=3$ and $q \neq 3$. For $q = 3$ and $p$ odd it 
follows that $\w_{(p+1)(p^2+p+1)}(q)(1- \w_{p-1}(q)) = \w_{p+1}(3)$. Thus 
Theorem \ref{thp3q} d) for $q = 3$ and $p$ odd reads
$\GN(3 p^3) = 5 + 14 \w_{p-1}(3) + 2 \w_{p+1}(3)$.
If $q > 3$ and $p$ is odd, then 
$\w_{(p+1)(p^2+p+1)}(q)(1- \w_{p-1}(q)) = \w_{(p+1)(p^2+p+1)}(q)$ holds. 
Again, this can be used to simplify the formula of Theorem \ref{thp3q} d). 
\bigskip

\begin{proof}
The proof follows the same strategy as the proof of Theorem \ref{thp2q}.
Burnside's theorem asserts that every group of order $p^3 q$ is solvable.
It is easy to see that there are five nilpotent groups of order $p^3 q$:
the groups $G \times C_q$ with $|G| = p^3$. It remains to consider the 
non-nilpotent groups of the desired order.
\medskip

{\em Non-nilpotent groups with normal $p$-Sylow subgroup.} These groups have 
the form $N \split U$ with $|N| = p^3$ and $U = C_q$. Using Theorem 
\ref{Taunt2}, they correspond to the conjugacy classes of subgroups of order
$q$ in $\Aut(N)$. There are five isomorphism types of groups $N$ of order
$p^3$. For $p=2$, the groups $\Aut(Q_8)$ and $\Aut(C_2^3) = GL(3,2)$ 
have subgroups of order coprime to $2$; that is, $\Aut(Q_8)$ has one
conjugacy class of subgroups of order $3$ and $GL(3,2)$ has one conjugacy
class of subgroups of order $3$ and $7$. This leads to the special cases 
in a) and shows that in all other cases on $q$ this type of group does not 
exist for $p=2$. It remains to consider the case $p > 2$.
\begin{items}
\item[$\bullet$]
$N \cong C_{p^3}$: Then $\Aut(N)$ is cyclic of order $p^2(p-1)$. 
Thus $\Aut(N)$ has at most one subgroup of order $q$ and this exists if
and only if $q \mid (p-1)$. This adds 
\[ c_1 := \w_{p-1}(q).\]
\item[$\bullet$]
$N \cong C_{p^2} \times C_p$: In this case $\Aut(N)$ is solvable and
has a normal Sylow $p$-subgroup with $p$-complement of the form $C_{p-1}^2$.
Thus $\Aut(N)$ contains a subgroup of order $q$ if and only if $q \mid (p-1)$. 
In this case there are $q+1$ such subgroups in $C_q^2$ and these translate
to conjugacy classes of such subgroups in $\Aut(N)$. This adds
\[ c_2 := (q+1) \w_{p-1}(q).\]
\item[$\bullet$]
$N$ is extraspecial of exponent $p$: Then $Aut(N) \ra Aut(N/\phi(N))
\cong GL(2,p)$ is surjective. Thus the conjugacy classes of subgroups of
order $q$ in $\Aut(N)$ correspond to the conjugacy classes of subgroups of
order $q$ in $GL(2,p)$. These are counted in Theorem \ref{linear2} (a). Hence
this adds
\begin{eqnarray*}
 c_3 &:=& \frac{q+3}{2} \w_{p-1}(q) + \w_{p+1}(q) \mbox{ if } q > 2, \\
 c_3 &:=& 2 \mbox{ if } q = 2.
\end{eqnarray*}
\item[$\bullet$]
$N$ is extraspecial of exponent $p^2$: Then $\Aut(N)$ is solvable and
has a normal Sylow $p$-subgroup with $p$-complement of the form $C_{p-1}$.
Thus there is at most one subgroup of order $q$ in $\Aut(N)$ and this exists 
if and only if $q \mid (p-1)$. This adds 
\[ c_4 := \w_{p-1}(q).\]
\item[$\bullet$]
$N \cong C_p^3$: Then $\Aut(N) \cong \GL(3,p)$. The conjugacy classes of 
subgroups of order $q$ in $\GL(3,p)$ are counted in Theorem \ref{linear3}.
Hence this adds
\begin{eqnarray*}
 c_5 &:=& \frac{1}{6}(q^2+4q+9 + 4\w_{q-1}(3)) \w_{p-1}(q)
          + \w_{(p+1)(p^2+p+1)}(q)(1-\w_{p-1}(q)) \mbox{ if } q > 2,\\
 c_5 &:=& 3 \mbox{ if } q = 2.
\end{eqnarray*}
\end{items}
\medskip

{\em Non-nilpotent groups with normal $q$-Sylow subgroup.} These groups 
have the form $N \split U$ with $N = C_q$ and $|U| = p^3$. Using Theorem
\ref{Taunt1}, we have to determine the $\Aut(U)$-orbits of proper normal 
subgroups $K$ in $U$ with $U/K$ cyclic of order dividing $q-1$ and then 
for each such $K$ determine $\ind_K$.
\begin{items}
\item[$\bullet$]
$U \cong C_{p^3}$: In this case there are the options $K \in \{1, C_p,
C_{p^2}\}$ and all of these have $\ind_K = 1$. Hence this adds
\[ c_6 := \w_{q-1}(p) + \w_{q-1}(p^2) + \w_{q-1}(p^3).\]
\item[$\bullet$]
$U \cong C_{p^2} \times C_p$: In this case there are two normal subgroups
$K$ with $U/K \cong C_p$ and there is one normal subgroup $K$ with $U/K
\cong C_{p^2}$ up to $\Aut(U)$-orbits. All of these have $\ind_K = 1$. Thus
this adds 
\[ c_7 := 2 \w_{q-1}(p) + \w_{q-1}(p^2).\]
\item[$\bullet$]
$U$ extraspecial of exponent $p$ (or $Q_8$): In this case there is one 
$\Aut(U)$-orbit of normal subgroups $K$ with $U/K \cong C_p$ and this has 
$\ind_K = 1$. Hence this adds 
\[ c_8 := \w_{q-1}(p).\]
\item[$\bullet$]
$U$ extraspecial of exponent $p^2$ (or $D_8$): Then there are two 
$\Aut(U)$-orbits of normal subgroups $K$ with $U/K \cong C_p$; one has 
$\ind_K = 1$ and the other has $\ind_K = p-1$. Thus this case adds 
\[ c_9 := p \w_{q-1}(p).\]
\item[$\bullet$]
$U \cong C_p^3$: In this case there is one $\Aut(U)$-orbit of normal
subgroups $K$ with $U/K \cong C_p$ and this has $\ind_k = 1$. Thus this
adds 
\[ c_{10} := \w_{q-1}(p).\]
\end{items}
\medskip

{\em Groups without normal Sylow subgroup.} Let $G$ be such a group and
let $F$ be the Fittingsubgroup of $G$. Since $G$ is solvable, it follows
that $F$ is not trivial. Further, $pq \mid [G : F]$ by construction. Thus 
$|F| = p$ or $|F| = p^2$ are the only options. Recall that $G/F$ acts 
faithfully on $F/\phi(F)$. If $F$ is cyclic of order $p$ or $p^2$, then 
a group of order $pq$ cannot act faithfully on $F/\phi(F)$. Hence $F
\cong C_p^2$ is the only remaining possibility. 
In this case $G/F$ has order $pq$ and embeds into $Aut(F) \cong GL(2,p)$;
thus $q \mid (p+1)(p-1)$. As $F$ is the Fitting subgroup of $G$ of order
$p^2$, it follows that $G/F$ cannot have a normal subgroup of order $p$.
Hence $G/F \cong C_q \split C_p$ and $p \mid (q-1)$. This is only possible
for $p=2$ and $q=3$ and thus is covered by the special case for groups of
order $3 \cdot 2^3$.
\medskip

Except for the special cases, it now remains to sum up the values $5 
+ \sum_{i=6}^{10} c_i$ for $p = 2$ and $5 + \sum_{i=1}^{10} c_i$ for
$p > 2$. This yields the formulas exhibited in the theorem.
\end{proof}

\section{Groups of order $p^2q^2$}

The groups of order $p^2 q^2$ have been determined by Lin \cite{Lin74}, 
Le Vavasseur \cite{Vav02} and Laue \cite{Lau82}. Lin's work is essentially 
correct (it only has minor mistakes) and it agrees with the work by Laue 
\cite[p. 214-43]{Lau82} and our results. Lin seems unaware of the work by 
Le Vavasseur \cite{Vav02}. We have not compared our results with those of
Le Vavasseur.

\begin{theorem} \label{thp2q2}
Let $p$ and $q$ be different primes with $p < q$.
\begin{items}
\item[\rm (a)]
There is one special case $\GN(2^2 \cdot 3^2) = 14$.
\item[\rm (b)]
If $p = 2$, then $\GN(p^2 q^2) = 12 + 4 \w_{q-1}(4)$.
\item[\rm (c)]
If $p > 2$, then 
\[ \GN(p^2 q^2) = 4 + (p^2+p+4)/2 \cdot \w_{q-1}(p^2) 
                  + (p+6) \w_{q-1}(p) 
                  + 2 \w_{q+1}(p) + \w_{q+1}(p^2).\] 
\end{items}
\end{theorem}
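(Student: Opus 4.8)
The plan is to follow the same strategy as in the proofs of Theorems \ref{thp2q} and \ref{thp3q}: since every group of order $p^2q^2$ is solvable by Burnside's theorem, we split the count into four parts according to the structure of the Fitting subgroup $F$ and the location of normal Sylow subgroups. First I would record the nilpotent groups: there are $4$ of them, namely the products $A \times B$ with $|A| = p^2$, $|B| = q^2$, each of $A,B$ being cyclic or elementary abelian. The remaining work is to enumerate the non-nilpotent groups, and by symmetry of roles (with the proviso $p<q$ breaking ties) this divides into (i) groups with normal Sylow $p$-subgroup, i.e.\ $N \rtimes U$ with $|N| = p^2$ and $U$ of order $q^2$; (ii) groups with normal Sylow $q$-subgroup, i.e.\ $N \rtimes U$ with $N$ of order $q^2$ and $|U| = p^2$; and (iii) groups with no normal Sylow subgroup.

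For case (i) I would apply Theorem \ref{Taunt1} with $q^k$ replaced by (a prime power dividing) $q^2$ in the role of the quotient group, or more precisely argue directly: $U$ has order $q^2$ so $U \cong C_{q^2}$ or $U \cong C_q^2$, and we need the $\Aut(U)$-classes of normal subgroups $K \trianglelefteq U$ with $U/K$ cyclic of order dividing the exponent of $\Aut(N)$, together with the indices $\ind_K$. When $N \cong C_p^2$, $\Aut(N) = \GL(2,p)$ has exponent divisible by $p(p-1)(p+1)/\gcd$, and the relevant subgroups of prime and prime-squared order were counted in Theorem \ref{linear2}(a),(b); when $N \cong C_{p^2}$, $\Aut(N)$ is cyclic of order $p(p-1)$. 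This gives contributions governed by $\w_{q-1}(p)$, $\w_{q-1}(p^2)$, $\w_{q+1}(p)$, $\w_{q+1}(p^2)$. Case (ii) is the mirror image with the roles of the two primes exchanged, but now because $p < q$ the divisibility conditions $\w_{p-1}(q)$ and $\w_{p+1}(q)$ can \emph{only} hold when $q \in \{p-1, p+1\}$, which for primes forces $p = 2$, $q = 3$; this is exactly the special case $\GN(2^2 \cdot 3^2)$ and the origin of the clean dichotomy between parts (b) and (c). For case (iii), let $F$ be the Fitting subgroup; solvability and non-nilpotence force $1 < F < G$, and the absence of a normal Sylow subgroup forces $pq \mid [G:F]$, hence $|F| \in \{p^2, q^2, pq, p, q\}$ up to relabelling; since $G/F$ acts faithfully on $F/\Phi(F)$ and $|G/F|$ is divisible by both $p$ and $q$, one checks case by case that the only surviving possibility again occurs for very small primes and is absorbed into the special case.

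I would then assemble the count: for $p > 2$ we add the nilpotent count $4$ to the contributions from case (i), obtaining the formula of part (c); for $p = 2$ the $\w_{q-1}(p^2) = \w_{q-1}(4)$ terms survive (and $\w_{q-1}(p) = \w_{q-1}(2) = 1$ always, $\w_{q+1}(2) = 1$ always since $q$ is odd) so the polynomial collapses to $12 + 4\w_{q-1}(4)$ as in part (b); and the clash at $p=2$, $q=3$ coming from case (ii)/(iii) accounts for the discrepancy in part (a). The main obstacle I expect is case (i) with $N \cong C_p^2$ and $U \cong C_q^2$: here Theorem \ref{Taunt1} requires care because $\Aut(U) = \GL(2,q)$ is nonabelian, so for a normal subgroup $K$ with $U/K \cong C_q$ one must compute $A_K = [\Aut(U) : \mathrm{Stab}_{\Aut(U)}(K)]$-style indices correctly — there is a single $\Aut(U)$-class of such $K$ but $\ind_K$ need not be $1$ — and simultaneously one must track which conjugacy classes of order-$q$ and order-$q^2$ subgroups $S$ of $\GL(2,p)$ from Theorem \ref{linear2} are cyclic (so that $S \cong U/K \cong C_q$ forces the cyclic sub-count) versus of type $C_q^2$ (matching $U$ itself, i.e.\ $K = 1$). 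Keeping the bookkeeping between the $S$-side (Theorem \ref{linear2}) and the $K$-side (normal subgroups of $U$) consistent, and verifying that the two descriptions of $\O$ in Theorem \ref{Taunt} agree, is where the real combinatorial effort lies; the number-theoretic simplification at $p = 2$ and the identification of the special case are then routine.
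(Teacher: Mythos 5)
Your overall decomposition (nilpotent groups, normal Sylow $p$, normal Sylow $q$, neither) can be made to work, but as written the argument attaches the data to the wrong cases. In your case (i) the kernel $N$ has order $p^2$ and the acting group $U$ has order $q^2$, so the relevant subgroups of $\Aut(N)$ (which is $\GL(2,p)$ or $C_{p(p-1)}$) have order $q$ or $q^2$, and the governing conditions are $\w_{p-1}(q)$, $\w_{p-1}(q^2)$, $\w_{p+1}(q)$, $\w_{p+1}(q^2)$; since $q>p$ these force $(p,q)=(2,3)$, so it is case (i), not case (ii), that degenerates to the special case of order $36$. Conversely it is case (ii) (normal Sylow $q$-subgroup, subgroups of order $p$ and $p^2$ in $\GL(2,q)$ resp.\ $C_{q(q-1)}$) that yields the conditions $\w_{q-1}(p)$, $\w_{q-1}(p^2)$, $\w_{q+1}(p)$, $\w_{q+1}(p^2)$ and hence the formula of part (c). Your second paragraph pairs Theorem \ref{linear2} applied to $\GL(2,p)$ with the conditions $\w_{q-1}(p)$, which is internally inconsistent, and your assembly ``add the nilpotent count $4$ to the contributions from case (i)'' is wrong as stated (repairable by swapping the labels). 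Moreover the step you flag as ``the real combinatorial effort'' is left undone: for $N\cong C_q^2$ and $|U|=p^2$ one must check that for the unique $\Aut(U)$-class of subgroups $K$ of order $p$ the induced group $A_K$ is all of $\Aut(U/K)$ (true, since the stabilizer of a line in $\GL(2,p)$, resp.\ $\Aut(C_{p^2})$, induces the full $\Aut(C_p)$), after which the tally is $s_p(\GL(2,q))$ counted twice (once for each isomorphism type of $U$) plus $s_{p^2}(\GL(2,q))$ once; the paper short-circuits your cases (i) and (iii) altogether by the Sylow count $m_q\in\{1,p,p^2\}$, which forces $m_q=1$ unless $(p,q)=(2,3)$.

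The more serious error is your claim that part (b) follows from part (c) by setting $p=2$ and using $\w_{q-1}(2)=\w_{q+1}(2)=1$. It does not: formula (c) at $p=2$ evaluates to $14+5\,\w_{q-1}(4)+\w_{q+1}(4)$, i.e.\ $19$ or $15$, whereas (b) gives $16$ or $12$. The reason is that the subgroup counts of Theorem \ref{linear2} for the prime $2$ are given by separate statements, $s_2(\GL(2,q))=2$ and $s_4(\GL(2,q))=2+3\,\w_{q-1}(4)$, and are not obtained by substituting $p=2$ into the odd-prime expressions (which are not even integers there); so the case $p=2$ must be run through the same case analysis with these counts, giving $4+c_1+c_2+2c_3+c_4=12+4\,\w_{q-1}(4)$. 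Two smaller points: in your case (iii), $pq\mid[G:F]$ already excludes $|F|\in\{p^2,q^2\}$, and in fact that case is empty for every $p<q$ including $(2,3)$, since $|F|\mid pq$ forces $G/F$ (of order divisible by $q$) to act faithfully on $F/\Phi(F)$ although $q\nmid(p-1)(q-1)$; the two extra groups of order $36$ have a normal Sylow $2$-subgroup and arise from your case (i), not from (ii)/(iii).
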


\begin{proof}
First, all groups of this order are solvable by Burnside's theorem and it
is obvious that there are 4 isomorphism types of nilpotent groups of this
order. Next, we consider Sylow's theorems. Let $m_q$ denote the number of 
Sylow $q$-subgroups in a group of order $p^2 q^2$ and recall that $p < q$. 
Then $m_q \equiv 1 \bmod q$ and $m_q \mid p^2$. Thus $m_q \in \{1, p, p^2\}$. 
If $m_q = p$, then $q \mid (p-1)$ and this is impossible. If $m_q = p^2$,
then $q \mid p^2-1 = (p-1)(p+1)$. Thus either $q \mid p-1$ or $q \mid p+1$.
Again, this is impossible unless $p = 2$ and $q = 3$. Thus if $(p,q) \neq 
(2,3)$, then $m_q = 1$ and $G$ has a normal Sylow $q$-subgroup. 
\medskip

{\em Non-nilpotent groups with normal Sylow $q$-subgroup.}
These groups have the form $N \split U$ with $|N| = q^2$ and $|U| = p^2$.
We consider the arising cases.
\begin{items}
\item[$\bullet$]
$N \cong C_{q^2}$ and $U \cong C_{p^2}$: Then $\Aut(N) \cong C_{q (q-1)}$ and
thus there are at most one subgroup of order $p$ or $p^2$ in $\Aut(N)$. We
use Theorem \ref{Taunt2} to determine that this case adds
\[ c_1 := \w_{q-1}(p^2) + \w_{q-1}(p).\]
\item[$\bullet$]
$N \cong C_{q^2}$ and $U \cong C_p^2$: This case is similar to the first case
and adds
\[ c_2 := \w_{q-1}(p).\]
\item[$\bullet$]
$N \cong C_q^2$: Let $\varphi : U \ra \Aut(N) = GL(2,q)$ and denote $K = 
ker(\varphi)$. If $|K| = p$, then $U/K \cong C_p$. In both cases on the 
isomorphims type of $U$ there is one $\Aut(U)$-orbit of normal subgroups
of order $p$ and this satisfies $\ol{A}_K = \Aut(U/K)$. Hence in this case
it remains to count the number $c_3$ of conjugacy classes of subgroups of 
order $p$ in $\GL(2,q)$, see Theorem \ref{linear2} (a). Thus this adds
\begin{eqnarray*}
c_3 &=& 2 \mbox{ if } p = 2, \\
c_3 &=& \frac{p+3}{2} \w_{q-1}(p) + \w_{q+1}(p) \mbox{ if } p > 2.
\end{eqnarray*}
If $|K| = 1$, then $U/K \cong U$. Clearly, there is one $\Aut(U)$-orbit of
such normal subgroups and it satisfies $\ol{A}_K = \Aut(U/K)$. Hence in 
this case it remains to count the number $c_4$ of conjugacy classes of 
subgroups of order $p^2$ in $GL(2,q)$, see Theorem \ref{linear2} (b). Thus
this adds
\begin{eqnarray*}
c_4 &=& 2 + 3 \w_{q-1}(4) \mbox{ if } p = 2, \\
c_4 &=& \w_{q-1}(p) + (\frac{p(p+1)}{2}+1) \w_{q-1}(p^2) + \w_{q+1}(p^2) 
        \mbox{ if } p > 2.
\end{eqnarray*}
\end{items}
The number of groups of order $p^2 q^2$ can now be read off as $4 + c_1 + 
c_2 + 2 c_3 + c_4$ and this yields the above formulae.
\end{proof}

\section{Groups of order $p^2 qr$}

The groups of order $p^2 qr$ have been considered by Glenn \cite{Gle06} and
Laue \cite{Lau82}. Glenn's work has several problems. There are groups 
missing from the summary tables, there are duplications and some of the
invariants are not correct; this affects in particular the summary Table 3.
Laue \cite[p. 244-62]{Lau82} does not agree with Glenn. We have not 
compared our results with those of Laue.

\begin{theorem} \label{thp2qr}
Let $p,q$ and $r$ be different primes with $q < r$.
\begin{items}
\item[\rm (a)]
There is one special case $\GN(2^2 \cdot 3 \cdot 5 ) = 13$.
\item[\rm (b)]
If $q = 2$, then 
\[ \GN(p^2 q r) =  10 
   + (2r+7) \w_{p-1}(r) 
   + 3 \w_{p+1}(r)
   + 6 \w_{r-1}(p) 
   + 2 \w_{r-1}(p^2).  \]
\item[\rm (c)]
If $q > 2$, then $\GN(p^2 q r)$ is equal to
\begin{eqnarray*}
   &&  2 + (p^2-p) \w_{q-1}(p^2) \w_{r-1}(p^2) \\
   && + (p-1) (\w_{q-1}(p^2) \w_{r-1}(p)
         +\w_{r-1}(p^2) \w_{q-1}(p)+2 \w_{r-1}(p) \w_{q-1}(p)) \\
   && + (q-1) (q+4)/2 \cdot  \w_{p-1}(q) \w_{r-1}(q)  \\
   && + (q-1)/2 \cdot (\w_{p+1}(q) \w_{r-1}(q) + \w_{p-1}(q) 
           + \w_{p-1}(qr) + 2 \w_{r-1}(pq) \w_{p-1}(q)) \\
   &&   + (qr+1)/2 \cdot  \w_{p-1}(qr)  \\
   &&   + (r+5)/2 \cdot  \w_{p-1}(r) (1 + \w_{p-1}(q)) \\
   &&   + \w_{p^2-1}(qr) + 2 \w_{r-1}(pq) + \w_{r-1}(p) \w_{p-1}(q) 
        + \w_{r-1}(p^2q) \\
   &&   + \w_{r-1}(p) \w_{q-1}(p) + 2 \w_{q-1}(p) 
        + 3 \w_{p-1}(q) + 2 \w_{r-1}(p)  \\
   &&   + 2 \w_{r-1}(q) + \w_{r-1}(p^2)
        + \w_{q-1}(p^2) + \w_{p+1}(r) + \w_{p+1}(q).
\end{eqnarray*}
\end{items}
\end{theorem}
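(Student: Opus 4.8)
The plan is to follow the same strategy as in the proofs of Theorems \ref{thp2q}, \ref{thp3q} and \ref{thp2q2}: every group of order $p^2qr$ is solvable by Burnside's theorem, so I would stratify the groups according to which Sylow subgroup (if any) is normal, using Sylow's theorems to cut down the possibilities. First I would dispose of the nilpotent case, which contributes exactly $2$ (from $C_{p^2}\times C_q\times C_r$ and $C_p^2\times C_q\times C_r$). For the non-nilpotent groups I would first determine which Sylow subgroups must be normal: since $p^2qr$ has only four prime divisors with multiplicity, a Sylow count shows that outside a small exceptional locus (leading to the special case $\GN(2^2\cdot 3\cdot 5)=13$) at least one of the Sylow subgroups is normal, and in fact the subgroup $O_{qr}(G)$ generated by normal Sylow $q$- and $r$-parts will typically be nontrivial; more precisely I expect that $G$ has a normal subgroup of order $qr$, $q$, $r$, or $p^2$, and I would split into the cases according to the isomorphism type of the Fitting subgroup $F(G)$, exactly as in the earlier proofs.

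Next I would handle each case as a counting problem for split (or, where the kernel is abelian and fully invariant, general) extensions using the machinery of Section 4. For a normal subgroup $N$ of order coprime to the complement $U$, the split extensions $N\rtimes U$ are counted via Theorem \ref{Taunt}, and in the two special shapes $N=C_{q^k}$ or $U=C_{q^k}$ I can invoke Theorems \ref{Taunt1} and \ref{Taunt2} directly. Concretely: groups with normal Sylow $p$-subgroup of order $p^2$ and complement of order $qr$ reduce to counting conjugacy classes of subgroups of order $q$, $r$, $qr$ in $\GL(2,p)$ (when $N\cong C_p^2$) or in the cyclic group $C_{p(p-1)}$ (when $N\cong C_{p^2}$), which is exactly what Theorem \ref{linear2} supplies — parts (a) and (c) of that theorem give the $\w_{p-1},\w_{p+1},\w_{p^2-1}$ terms appearing in the formula. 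Groups with normal Sylow $q$- or $r$-subgroup $C_q$ or $C_r$ and complement of order $p^2q$ or $p^2r$ reduce, via Theorem \ref{Taunt1}, to counting $\Aut(U)$-classes of normal subgroups $K\trianglelefteq U$ with cyclic quotient together with the indices $\ind_K$; here $U$ ranges over the four isomorphism types of groups of order $p^2q$ (or $p^2r$), and the interesting contributions come from the nonabelian $U$, where a $C_p$-quotient can have $\ind_K=p-1$, producing the $(p-1)$- and $(p^2-p)$-coefficients in the statement. The cases where $F(G)$ has mixed order (e.g. $F\cong C_{qr}$, or $F$ a Sylow with extra normal parts) must be chased to avoid double-counting and to pick up the cross terms $\w_{r-1}(pq)$, $\w_{p-1}(qr)$, $\w_{r-1}(p^2q)$, etc.

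The main obstacle, as always in this kind of census, is bookkeeping: ensuring that the stratification by $F(G)$ is a genuine partition (every non-nilpotent group appears in exactly one case), that within each case the relevant Taunt/compatible-pairs count is computed correctly — in particular the index $\ind_K=[\Aut(U/K):A_K]$ for the nonabelian groups of order $p^2q$ and the action of $N_{\GL(2,p)}(H)$ on subgroups $H$ of order $r$ (Remark \ref{norms}) — and that the parity/coprimality side-conditions ($q<r$, $p$ odd versus $p=2$) are tracked so the $\w$-functions collapse correctly. I would also need to isolate the genuinely exceptional configuration: a group of order $2^2\cdot 3\cdot 5$ with no normal Sylow subgroup (here $A_5$ and friends, or rather the solvable groups with $S_3$ or $A_4$-type quotients acting) forces the single special value in part (a). Having set up all cases, the remaining work is to sum the contributions $c_i$ over the cases and to verify, by distinguishing $q=2$ from $q>2$ (since $2$ behaves differently for $\w_{p-1}$ versus $\w_{p+1}$ in $\GL(2,p)$, cf.\ Theorem \ref{linear2}(a)), that the totals match the stated polynomials on residue classes; I would cross-check the final formula against the Small Groups library for small $p,q,r$ as described in the introduction.
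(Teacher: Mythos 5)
Your overall architecture (two nilpotent groups, then a stratification of the solvable non-nilpotent groups by the Fitting subgroup, with each stratum counted via Theorem \ref{Taunt}/\ref{Taunt1}/\ref{Taunt2} and the subgroup counts of Theorem \ref{linear2} and Remark \ref{norms}) is the paper's strategy. But there are concrete gaps. First, the opening claim that ``every group of order $p^2qr$ is solvable by Burnside's theorem'' is false: Burnside's theorem covers orders $p^aq^b$ only, and $A_5$ of order $2^2\cdot 3\cdot 5$ is non-solvable. The paper's special case (a) is exactly the statement that $A_5$ is the unique non-solvable group whose order has the shape $p^2qr$, and this fact (not a Sylow count of ``groups with no normal Sylow subgroup'' or ``solvable groups with $S_3$ or $A_4$-type quotients'') is what must be established before the Fitting-subgroup analysis can even begin; your plan never supplies a correct argument for solvability outside order $60$, and it mislocates the source of the exceptional value $13$.

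Second, the counting plan is tilted too far towards split extensions and towards letting the complement range over whole isomorphism classes of groups of order $p^2q$ or $p^2r$. In the Fitting stratification the quotient $G/F$ acts faithfully on $F/\phi(F)$, so when $F\cong C_r$ the quotient embeds in $C_{r-1}$ and is forced to be cyclic, contributing only $\w_{r-1}(p^2q)$; your picture of ``$U$ ranging over the four isomorphism types of groups of order $p^2q$'' with $\ind_K=p-1$ producing the $(p-1)$- and $(p^2-p)$-coefficients is wrong on both counts (the number of groups of order $p^2q$ is not four, and in the paper those coefficients arise from counting subgroups of order $p$ and $p^2$ in $\Aut(C_q\times C_r)\cong C_{q-1}\times C_{r-1}$ in the cases $|F|=qr$ and $|F|=pqr$), and letting $U$ range over all groups with a normal $C_r$ would double-count strata such as $|F|=p^2r$. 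Third, not every relevant group is a split extension of coprime pieces: in the case $F\cong C_p\times C_r$ with $G/F\cong C_p\times C_q$ the Sylow $p$-subgroup meets $F$ in $C_p$, and when the $q$-part acts trivially on that $C_p$ the paper must invoke Theorem \ref{cubefree} to get two isomorphism types of (split and non-split) extensions; your generic reference to ``general extensions where the kernel is abelian and fully invariant'' does not identify this step, and without it the terms $2\w_{r-1}(pq)$ and the $c_4$-type contributions cannot be recovered.
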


\begin{proof}
The exists one non-solvable group of order $p^2 q r$ and this is the 
group $A_5$ of order $60$. Further, there are two nilpotent groups of 
order $p^2qr$. 
In the following we consider the solvable, non-nilpotent groups $G$ of 
order $p^2 qr$. Let $F$ be the Fitting subgroup of $G$. Then $F$ and 
$G/F$ are both non-trivial and $G/F$ acts faithfully on $\ol{F} := F/\phi(F)$ 
so that no non-trivial normal subgroup of $G/F$ stabilizes a series through 
$\ol{F}$. This yields the following cases.
\begin{items}
\item[$\bullet$]
Case $|F| = p$: In this case $\ol{F} = F$ and $Aut(\ol{F}) = C_{p-1}$. Hence
$G/F$ is abelian and has a normal subgroup isomorphic to $C_p$. This is a
non-trivial normal subgroup which stabilizes a series through $\ol{F}$. As
this cannot occur, this case adds $0$.
\item[$\bullet$]
Case $|F| = q$: In this case $\ol{F} = F$ and $Aut(\ol{F}) = C_{q-1}$ and
$|G/F| = p^2 r$. Thus $p^2 r \mid (q-1)$ and this is impossible, since $r
> q$. Thus this case adds $0$.
\item[$\bullet$]
Case $|F| = r$: In this case $\ol{F} = F$ and $Aut(\ol{F}) = C_{r-1}$ and
$|G/F| = p^2 q$. Hence $G = C_r \split C_{p^2 q}$. Since $Aut(C_r)$ is
cyclic, it has at most one subgroup of order $p^2 q$ and this exists if and
only if $p^2 q \mid (r-1)$. By Theorem \ref{Taunt}, this case adds 
\[ c_1 := \w_{r-1}( p^2 q ).\]
\item[$\bullet$] 
Case $|F| = p^2$ and $F \cong C_{p^2}$. Then $G = C_{p^2} \split C_{qr}$ and
$C_{qr}$ acts faithfully on $C_{p^2}$. Note that $Aut(C_{p^2})$ is cyclic
of order $p(p-1)$. Thus this case arises if and only if $qr \mid (p-1)$ and
in this case there is a unique subgroup of $Aut(F)$ of order $qr$. Again
by Theorem \ref{Taunt}, this case adds
\[ c_2 := \w_{p-1}(qr). \]
\item[$\bullet$] 
Case $|F| = p^2$ and $F \cong C_p^2$. Then $G = C_p^2 \split H$ with $H$
of order $qr$ and $H$ embeds into $Aut(C_p^2) = GL(2,p)$. By Theorem 
\ref{Taunt}, the number of groups $G$ corresponds to the number of conjugacy
classes of subgroups of order $qr$ in $\GL(2,p)$. By Theorem \ref{linear2} (c),
this adds
\begin{eqnarray*}
c_3 &:=& \frac{3r+7}{2} \w_{p-1}(r) + 2 \w_{p+1}(r) \mbox{ if } q = 2, \\
c_3 &:=& \frac{qr + q + r + 5}{2} \w_{p-1}(qr) + 
  \w_{p^2-1}(qr)(1-\w_{p-1}(qr)) \mbox{ if } q>2.
\end{eqnarray*}
\item[$\bullet$] 
Case $|F| = pq$: In this case $\phi(F) = 1$ and $Aut(F) = C_{p-1} \times
C_{q-1}$. Thus $G/F$ is abelian and hence $G/F \cong C_p \times C_r$. Note
that $r > q$ and thus $G/F$ acts on $F$ in such a form that $C_p$ acts as
subgroup of $C_{q-1}$ and $C_r$ acts as subgroup of $C_{p-1}$. This implies
that $r \mid (p-1)$ and $p \mid (q-1)$. This is a contradiction to $r > q$
and hence this case adds $0$.
\item[$\bullet$] 
Case $|F| = pr$: In this case $\phi(F) = 1$ and $Aut(F) = C_{p-1} \times
C_{r-1}$. Thus $G/F$ is abelian and hence $G/F \cong C_p \times C_q$. It
follows that $p \mid (r-1)$ and $q \mid (p-1)(r-1)$. There are two 
cases to distinguish. First, suppose that the Sylow $q$-subgroup of $G/F$
acts non-trivially on the Sylow $p$-subgroup of $F$. Then $q \mid (p-1)$ 
and $G$ splits over $F$ by \cite[Th. 14]{DEi05}. Thus the group $G$ has
the form $F \split_\varphi G/F$ for a monomomorphism $\varphi : G/F \ra 
\Aut(F)$. As in Theorem \ref{Taunt}, the number of such groups $G$ is
given by the number of subgroups of order $pq$ in $\Aut(F)$. As the image 
of the Sylow $p$-subgroup of $G/F$ under $\varphi$ is uniquely determined, 
it remains to evaluate the number of subgroups of order $q$ in $\Aut(F)$ 
that act non-trivally on the Sylow $p$-subgroup of $F$. This number is 
$1 + (q-1) \w_{r-1}(q)$. As second case suppose
that $C_q$ acts trivially on $C_p$. Then $q \mid (r-1)$ and the action of 
$G/F$ on $F$ is uniquely determined. It remains to determine the number of
extensions of $G/F$ by $F$. By \cite[Th. 14]{DEi05} there exist two 
isomorphism types of extensions in this case. In summary, this case adds
\[ c_4 := \w_{r-1}(p)
    ( \w_{p-1}(q) (1+ (q-1)\w_{r-1}(q)) +2 \w_{r-1}(q) ). \]
\item[$\bullet$]
Case $|F| = qr$: In this case $G$ has the form $G \cong N \split_\varphi U$
with $N \cong C_q \times C_r$ and $|U| = p^2$ and $\varphi : U \ra \Aut(N)$
a monomorphism. By Theorem \ref{Taunt}, we have to count the number of 
subgroups
of order $p^2$ in $\Aut(N)$. Note that $\Aut(N) \cong C_{q-1} \times C_{r-1}$.
Thus the number of subgroups of the form $C_p^2$ in $\Aut(N)$ is 
\[ c_{5a} := \w_{q-1}(p) \w_{r-1}(p).\]
It remains to consider the number of cyclic subgroups of order $p^2$ in 
$\Aut(N)$. This number depends on $\gcd(q-1, p^2) = p^a$ and 
$\gcd(r-1,p^2) = p^b$. If $a, b \leq 1$, then this case does not arise. Thus 
suppose that $a = 2$. If $b = 0$, then this yields 1 group. If $b = 1$, then 
this yields $p$ groups. If $b = 2$, then this yields $p(p+1)$ groups. A similar 
results holds for the dual case $b = 2$. We obtain that this adds
\begin{eqnarray*}
 c_{5b} &:=&
       \w_{q-1}(p^2)(1-\w_{r-1}(p))                \\  
   &+& p \w_{q-1}(p^2)\w_{r-1}(p)(1-\w_{r-1}(p^2))  \\  
   &+& p(p+1)\w_{q-1}(p^2)\w_{r-1}(p^2)            \\  
   &+& \w_{r-1}(p^2)(1-\w_{q-1}(p))                \\  
   &+& p \w_{r-1}(p^2)\w_{q-1}(p)(1-\w_{q-1}(p^2)).     
\end{eqnarray*}
In summary, this case adds 
\begin{eqnarray*}
c_5 &=& c_{5a} + c_{5b} \\
    &=& (p^2 -p ) \w_{q-1}(p^2) \w_{r-1}(p^2) \\
      &&+ (p-1) (\w_{q-1}(p^2) \w_{r-1}(p) + \w_{r-1}(p^2) \w_{q-1}(p)) \\
      &&+ \w_{q-1}(p^2) + \w_{r-1}(p^2) + \w_{q-1}(p) \w_{r-1}(p). 
\end{eqnarray*}
\item[$\bullet$]
Case $|F| = pqr$: In this case $G$ has the form $G \cong N \split_\varphi U$
with $N \cong C_q \times C_r$ and $|U| = p^2$ and $\varphi$ has a kernel $K$
of order $p$. Again, we use Theorem \ref{Taunt} to count the number of arising
cases. The group $U$ is either cyclic or $U \cong C_p^2$. In both cases,
there is one $\Aut(U)$-class of normal subgroups $K$ of order $p$ in $U$ and 
this satsifies that $\Aut_K(U)$ maps surjectively on $\Aut(U/K)$. Thus
it remains to count the number of subgroups of order $p$ in $\Aut(N)$. As
$\Aut(N) = C_{q-1} \times C_{r-1}$, this case adds
\[ c_6 := 2( \w_{q-1}(p) + \w_{r-1}(p) + (p-1)\w_{q-1}(p)\w_{r-1}(p)).\]
\item[$\bullet$]
Case $|F| = p^2 q$: Then $G \cong F \split_\varphi U$ with $F = N \times C_q$ 
and $|N| = p^2$ and $U \cong C_r$. By Theorem \ref{Taunt2} we have to count
the number of conjugacy class representatives of subgroups of order $r$ in 
$\Aut(F)$. Recall that $r > q$ and $\Aut(F) = \Aut(N) \times C_{q-1}$. Thus
it remains to count the number of conjugacy classes of subgroups of order
$r$ in $\Aut(N)$. If $N$ is cyclic, then $\Aut(N) \cong C_{p(p-1)}$ and this 
number is $\w_{p-1}(r)$. If $N \cong C_p^2$, then $\Aut(N) \cong \GL(2,p)$
and this number is determined in Theorem \ref{linear2} (a). In summary, this
case adds
\[ c_7 := \frac{r+5}{2} \w_{p-1}(r) + \w_{p+1}(r).  \]
\item[$\bullet$]
Case $|F| = p^2 r$: This case is dual to the previous case with the exception
that now the bigger prime $r$ is contained in $|F|$. A group of this type  
has the form $N \split_\varphi U$ with $N$ nilpotent of order $p^2r$ and 
$U \cong C_q$. We consider the two cases on $N$. \\
If $N$ is cyclic, then $N = C_{p^2} \times C_r$ and $\Aut(N) = C_{p(p-1)}
\times C_{r-1}$. In this case we can use Theorem \ref{Taunt2} to count the
desired subgroups as
\[ c_{8a} := \w_{p-1}(q) + \w_{r-1}(q) + (q-1) \w_{p-1}(q) \w_{r-1}(q). \]
Now we consider the case that $N = C_p^2 \times C_r$. In this case we use a 
slightly different approach and note that all groups in this case have the 
form $M \split_\varphi V$ with $M = C_p^2$ and $|V| = qr$. If $V$ is cyclic,
then $\varphi : V \ra \Aut(M)$ has a kernel $K$ of order $r$. The other 
option is that $V$ is non-abelian and thus of the form $V = C_r \split C_q$. 
In this case the kernel $K$ of $\varphi$
has order $r$ or $qr$. We use Theorem \ref{Taunt} to count the number of
arising cases. If $V$ is cyclic, then it is sufficient to count the 
conjugacy classes of subgroups of order $q$ in $\Aut(M)$. By Theorem 
\ref{linear2} this adds
\[ c_{8b} := s_q(\GL(2,p)).\]
If $V$ is non-abelian, then $q \mid (r-1)$. We consider Theorem \ref{Taunt}
in more detail. First, suppose that $|K| = qr$. Then $\varphi$ is trivial and 
uniquely defined. Thus it remains to consider the case that $|K| = r$. Then 
the subgroup $A_K \leq \Aut(V/K)$ induced by $Stab_{\Aut(V)}(K)$ is the 
trivial group. Let $\SS$ be a set of conjugacy class representatives of 
subgroups of order $q$ in $\GL(2,p)$ and for $S \in \SS$ let $A_S$ denote
the subgroup of $\Aut(V/K)$ induced by the normalizer of $S$ in $\GL(2,p)$. 
Then Theorem \ref{Taunt} yields that the number of groups arising in this 
case is 
\[ c_{8c} := \w_{r-1}(q)(1 + \sum_{S \in \SS} \Aut(V/K) / A_S.\]
Next, note that $A_S$ can be determined via Remark \ref{norms}. As 
$\Aut(V/K) = C_{q-1}$, it follows that 
\begin{eqnarray*}
c_{8c} &=& 3 \mbox{ if } q = 2, \\
c_{8c} &=& \w_{r-1}(q)
           (1 + \frac{(q-1)(q+2)}{2} \w_{p-1}(q) + \frac{q-1}{2} \w_{p+1}(q))
          \mbox{ otherwise. }
\end{eqnarray*}
In summary, this case adds $c_8 = c_{8a} + c_{8b} + c_{8c}$ and this can
be evaluated to $c_8 = 8$ if $q = 2$ and if $q > 2$ then 
\begin{eqnarray*}
c_8 &=&  \frac{(q-1)(q+4)}{2} \w_{p-1}(q) \w_{r-1}(q) \\
        &&+ \frac{q-1}{2} \w_{p+1}(q) \w_{r-1}(q) \\
        &&+ \frac{q+5}{2} \w_{p-1}(q)  \\
        &&+ 2 \w_{r-1}(q) + \w_{p+1}(q).  
\end{eqnarray*}
\end{items}
It now remains to sum up the values for the different cases to determine
the final result. 
\end{proof}

\section{Final comments}

The enumerations of this paper all translate to group constructions. It
would be interesting to make this more explicit and thus to obtain a 
complete and irredundant list of isomorphism types of groups of all orders
considered here. 

\def\cprime{$'$} \def\cprime{$'$}

\end{document}